\newcommand{\NN}{\mathbb N}
\newcommand{\CC}{\mathbb C}
\newcommand{\RR}{\mathbb R}
\newcommand{\ZZ}{\mathbb Z}
\newcommand{\EE}{\mathcal E}
\newcommand{\DD}{\mathcal D}
\newcommand{\SSS}{\mathcal S}
\newcommand{\Ss}{\mathbb{S}}
\newcommand{\supp}{\operatorname{supp}}
\newcommand{\Op}{\operatorname{Op}}
\newcommand{\loc}{\operatorname{loc}}
\newcommand{\comp}{\operatorname{comp}}
\theoremstyle{plain}
\newtheorem{theorem}{Theorem}[section]
\newtheorem{proposition}[theorem]{Proposition}
\newtheorem{lemma}[theorem]{Lemma}
\newtheorem{corollary}[theorem]{Corollary}
\theoremstyle{remark}
\newtheorem{remark}[theorem]{Remark}
\theoremstyle{definition}
\newtheorem{definition}[theorem]{Definition}
\numberwithin{equation}{section}
\begin{document}

\author[P. Dimovski]{Pavel Dimovski}
\thanks{The work of P. Dimovski and B. Prangoski was partially supported by the bilateral project ``Microlocal analysis and applications'' funded by the Macedonian Academy of Sciences and Arts and the Serbian Academy of Sciences and Arts.}
\address{P. Dimovski, Faculty of Technology and Metallurgy, Ss. Cyril and Methodius University in Skopje, Ruger Boskovic 16, 1000 Skopje, Macedonia}
\email{dimovski.pavel@gmail.com}

\author[S. Pilipovi\' c]{Stevan Pilipovi\' c}
\thanks{The work of S. Pilipovi\'c was partially supported by the Serbian Academy of Sciences and Arts, project, F10.}
\address{Department of Mathematics and Informatics,
University of Novi Sad, Trg Dositeja Obradovi\'{c}a 4, 21000 Novi Sad, Serbia}
\email{stevan.pilipovic@dmi.uns.ac.rs}

\author[B. Prangoski]{Bojan Prangoski}
\address{Department of Mathematics, Faculty of Mechanical
Engineering-Skopje, Ss. Cyril and Methodius University in Skopje, Karposh 2 b.b., 1000 Skopje, Macedonia}
\email{bprangoski@yahoo.com}

\title[Mikhlin multipliers which do not preserve $L^1$-, $L^{\infty}$-regularity and continuity]{On a class of Mikhlin multipliers which do not preserve $L^1$-, $L^{\infty}$-regularity and continuity}

\keywords{Fourier multipliers, Mikhlin multipliers, Fourier multipliers discontinuous on $L^1$ and $L^{\infty}$, Fourier multipliers that do not preserve continuity, Wave front set with respect to a Banach space}

\subjclass[2010]{42B15}

\frenchspacing
\begin{abstract}
We show that every Fourier multiplier with real-valued and positively homogeneous symbol of order $0$, supported in a cone whose dual cone has a nonempty interior and such that the average of the positive part is sufficiently larger than the average of the negative part does not preserve the $L^1$- nor the $L^{\infty}$-regularity and neither the continuity. We also construct wave front sets which measure the microlocal regularity with respect to a large class of Banach spaces. As a consequence of the first part, we argue that one can never construct wave front sets that behave in a natural way and measure the microlocal $L^1$- nor $L^{\infty}$-regularity and neither the continuity.
\end{abstract}

\maketitle

\section{Introduction and preliminaries}

If $a\in \mathcal{C}^{\infty}(\RR^n)$ satisfies $\sup_{\xi\in\RR^n}(1+|\xi|)^{|\alpha|}|\partial^{\alpha} a(\xi)|<\infty$, $\alpha\in\NN^n$, both the Lizorkin-Marcinkiewicz multiplier theorem \cite{lizorkin} and the Mikhlin multiplier theorem \cite{mikhlin} verify that $a$ is a Fourier multiplier for $L^p(\RR^n)$, $1<p<\infty$. As standard, we call the smooth functions which satisfy these bounds Mikhlin multipliers.\\
\indent More generally, let $\Op(a)$ be the pseudo-differential operator (from now, often abbreviated as $\Psi$DO) with symbol $a\in S^r_{\rho,\delta}(\RR^{2n})$ defined by
\begin{equation*}
\Op(a)\varphi(x):=\frac{1}{(2\pi)^n}\int_{\RR^n}e^{ix\xi}a(x,\xi)\mathcal{F}\varphi(\xi) d\xi,\quad \varphi\in\SSS(\RR^n).
\end{equation*}
Here $S^r_{\rho,\delta}(\RR^{2n})$, $r\in\RR$, $0\leq \delta\leq\rho\leq 1$, $\delta<1$, is the Fr\'echet space of global symbols on $\RR^{2n}$ \cite{hor2} consisting of all $a\in\mathcal{C}^{\infty}(\RR^{2n})$ which satisfy $\sup_{x,\xi\in\RR^n}\langle\xi\rangle^{-r+\rho|\alpha|-\delta|\beta|}|\partial^{\alpha}_{\xi}\partial^{\beta}_x a(x,\xi)|<\infty$, $\alpha,\beta\in\NN^n$. The $\Psi$DO $\Op(a)$ is continuous on $\SSS(\RR^n)$ and it extends to a continuous operator on $\SSS'(\RR^n)$. Furthermore, if $a\in S^0_{1,\delta}(\RR^n)$ then $\Op(a)$ is continuous on $L^p(\RR^n)$, $1<p<\infty$; see \cite[Chapter 7, p. 323]{stein} (see also \cite{illner,taylor}). The $L^p$-continuity, $1<p<\infty$, $p\neq 2$, has also been established even in the case when $\rho<1$ but then the order $r$ is strictly less than $0$ and $r$, $p$, $\rho$ and $\delta$ have to satisfy certain inequality which is known to be optimal; see \cite{fefferman}. We refer to \cite{fefferman} also for results concerning $H^1(\RR^n)\rightarrow L^1(\RR^n)$ and $L^{\infty}(\RR^n)\rightarrow BMO(\RR^n)$ continuity of $\Psi$DOs. We also point out \cite{a-h,g-z,hor1,rodino,wan} and the references therein for the continuity properties on $L^p$, $BMO$ and the Hardy spaces of pseudo-differential operators with symbols in the pathological cases when $1\geq \delta>\rho$ or $\delta=\rho=1$. When the dimension $n$ is $1$, it is folklore that a Mikhlin multiplier (i.e. a $\Psi$DO with symbol independent of $x$) which equals $\operatorname{sgn}(\xi)$ in a neighbourhood of $\pm\infty$ gives an example of a $\Psi$DO with symbol in $S^0_{1,0}(\RR^2)$ which is not continuous on $L^1(\RR)$ nor on $L^{\infty}(\RR)$. The reason is that the Fourier side of such function is not a bounded measure and, by the theorem of H\"ormander on translation invariant operators \cite[Theorem 1.4]{hor-p1}, the spaces of continuous operators on $L^1(\RR)$ and on $L^{\infty}(\RR)$ are exactly the space of bounded Radon measures on $\RR$. This example can be generalised to $n>1$ dimensions by taking an $n$-fold tensor product of the one-dimensional example, however, the resulting function is not a Mikhlin multiplier: it does not belong to $S^0_{\rho,0}(\RR^{2n})$ for any $\rho>0$. In this article we show a more general result: if the Fourier multiplier is positively homogeneous of order $0$ away from the origin (consequently, it is a Mikhlin multiplier), has a nonnegative (or nonpositive) real or imaginary part with support in a closed cone $V$ whose dual cone has a nonempty interior then it is not a Fourier multiplier neither for $L^1(\RR^n)$ nor for $L^{\infty}(\RR^n)$. In fact, the real (or imaginary) part does not have to be nonnegative, only the average of the positive part to be bigger than a constant multiple of the average of the negative part where the constant is a measure of the angular distance between $V$ and its dual cone; see \eqref{def-ofk-for-funconsp} and \eqref{con-for-exi-offournotmeas} below. Moreover, we show something stronger: these Mikhlin multipliers are never continuous operators $L^1_{\comp}(\RR^n)\rightarrow L^1_{\loc}(\RR^n)$, neither $L^{\infty}_{\comp}(\RR^n)\rightarrow L^{\infty}_{\loc}(\RR^n)$, nor $\mathcal{K}(\RR^n)\rightarrow \mathcal{C}(\RR^n)$; here $\mathcal{K}(\RR^n)$ is the space of continuous functions with compact support equipped with its standard strict $(LB)$-space topology and, as usual, $\mathcal{C}(\RR^n)$ is the Fr\'echet space of continuous functions on $\RR^n$. This shows that the reason for the discontinuity is not the growth but the fact that they destroy the local $L^1$-regularity, $L^{\infty}$-regularity and continuity. The last fact that these operators are not continuous as maps $\mathcal{K}(\RR^n)\rightarrow \mathcal{C}(\RR^n)$ is interesting in itself since it is known that all $\Psi$DOs with symbols in $S^0_{1,0}(\RR^{2n})$ are continuous on the H\"older classes of order $\rho>0$ when $\rho$ is not an integer; see \cite[Theorem 8.6.14, p. 209]{hor1} and the comments after \cite[Definition 8.6.4, p. 203]{hor1}.\\
\indent In the last section, we construct wave front sets that measure the microlocal regularity of distributions with respect to a general Banach space $E$. By choosing $E$ appropriately, one recovers most of the wave front sets that appear in the literature: the Sobolev wave front set \cite{hor1}, the Besov wave front set \cite{dap-r-scl}, the wave front sets considered in \cite{cor-joh-tof1}, etc. It is important to point out that we do not assume neither that $E$ nor that the Fourier side of $E$ is solid. Finally, by applying the main result from the first part, we argue that one can never construct wave front sets that behave in a natural way and measure the microlocal $L^1$-regularity, $L^{\infty}$-regularity, or continuity of distributions.\\
\indent We end this section by fixing the notations in the article. As standard, we denote $\langle x\rangle:=(1+|x|^2)^{1/2}$, $x\in\RR^n$. Given $r>0$ and $x\in\RR^n$, set $B_r(x):=\{y\in\RR^n\,|\, |x-y|\leq r\}$. Given a measurable set $A\subseteq \RR^n$, $\mathbf{1}_A$ stands for the indicator function of $A$. We fix the constants in the Fourier transform as $\mathcal{F}f(\xi):=\int_{\RR^n}e^{-ix\xi}f(x)dx$, $f\in L^1(\RR^n)$. We write $\check{f}(x):=f(-x)$ for reflection about the origin. For a cone $V$ in $\RR^n$, we denote by $V^*$ its dual cone, i.e. $V^*:=\{x\in\RR^n\,|\, xy\geq0,\, \forall y\in V\}$; $V^*$ is always closed and $V^*=(\overline{V})^*$. The set $L\subseteq O\times (\RR^n\backslash\{0\})$, $O$ open in $\RR^n$, is said to be conic if it satisfies: $(x,\xi)\in L$ implies $(x,\lambda\xi)\in L$, $\forall \lambda>0$. When $a\in S^0_{1,0}(\RR^{2n})$ does not depend on $x$, i.e. $a(x,\xi)=a(\xi)$, we write $a(D)$ instead of $\Op(a)$ and we point out that $a(D)\varphi=\mathcal{F}^{-1}(a)*\varphi$, $\varphi\in\SSS(\RR^n)$. Given a compact set $K\subseteq \RR^n$, we denote by $\mathcal{K}_K(\RR^n)$ the Banach space of continuous functions on $\RR^n$ with support in $K$ while $\DD_K(\RR^n)$ stands for the Fr\'echet space of smooth functions on $\RR^n$ supported by $K$. The strong dual of $\mathcal{K}(\RR^n)$ is denoted by $\mathcal{K}'(\RR^n)$ and is the space of Radon measures on $\RR^n$ (see \cite[Chapter 3]{bourbaki}).\\
\indent Given two locally convex spaces $X$ and $Y$ (from now, always abbreviated as l.c.s.), $\mathcal{L}(X,Y)$ stands for the space of continuous linear operators from $X$ into $Y$ and we denote by $\mathcal{L}_b(X,Y)$ this space equipped with the strong operator topology. When $X=Y$, we abbreviate notations and write $\mathcal{L}(X)$ and $\mathcal{L}_b(X)$.

\section{The main result}

The following lemma is the key ingredient for the main result.

\begin{lemma}\label{lem-for-con-exampl}
Let $V\subseteq \RR^n$ be a closed cone such that $V\backslash\{0\}\neq \emptyset$ and $V':=\operatorname{int}V^*\neq \emptyset$. Let $\varphi_1\in L^{\infty}(\Ss^{n-1})$ be a real-valued function satisfying $\supp\varphi_1\subseteq \Ss^{n-1}\cap V$ and set $\varphi_{1,+}:=\max\{\varphi_1,0\}$ and $\varphi_{1,-}:=\max\{-\varphi_1,0\}$. Define
\begin{equation}\label{def-ofk-for-funconsp}
\kappa_0:=\sup_{\omega'\in \mathbf{S}^{n-1}\cap V'}(\inf_{\omega\in\supp\varphi_{1,-}}\omega\omega').
\end{equation}
If $\supp\varphi_{1,-}\neq \emptyset$ then $\kappa_0\in(0,1]$, otherwise $\kappa_0=\infty$. Assume that
\begin{equation}\label{con-for-exi-offournotmeas}
\int_{\mathbb{S}^{n-1}}\varphi_{1,+}(\omega)d\omega>\kappa_0^{-n}\int_{\mathbb{S}^{n-1}}\varphi_{1,-}(\omega)d\omega.
\end{equation}
Then for any $r>0$ the function $\varphi:\RR^n\rightarrow\RR$, $\varphi(x):=\varphi_1(x/|x|)\mathbf{1}_{(r,\infty)}(|x|)$, belongs to $L^{\infty}(\RR^n)$ but $\mathcal{F}\varphi\not\in\mathcal{K}'(\RR^n)$.
\end{lemma}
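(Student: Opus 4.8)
The claim $\varphi\in L^\infty(\RR^n)$ is immediate, since $|\varphi(x)|\le\|\varphi_1\|_{L^\infty(\mathbb S^{n-1})}$ a.e.; all the content lies in showing that $\mathcal F\varphi$ is not a Radon measure. Set $h(x):=\varphi_1(x/|x|)\in L^\infty(\RR^n)$, which is homogeneous of degree $0$; then $\varphi=h-h\mathbf 1_{B_r(0)}$, and the subtracted term is in $L^1(\RR^n)$ with compact support, so $\mathcal F\varphi-\mathcal F h$ is continuous. Thus $\mathcal F\varphi$ agrees, up to a continuous function, with the distribution $\mathcal F h$, which is homogeneous of degree $-n$ and nonzero; since dilation invariance prevents a nonzero degree $-n$ distribution from being a locally finite measure unless it is a multiple of $\delta_0$, one expects $\mathcal F\varphi$ not to be a Radon measure. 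I would make this precise by contradiction: if $\mathcal F\varphi$ were a Radon measure $\mu$, then $|\langle\mathcal F\varphi,\chi\rangle|\le\|\chi\|_{L^\infty}\,|\mu|(B_1(0))$ for all $\chi\in\DD_{B_1(0)}(\RR^n)$, and I will contradict this by exhibiting $\chi_N\in\DD_{B_1(0)}(\RR^n)$ with $\|\chi_N\|_{L^\infty}$ bounded in $N$ but $|\langle\mathcal F\varphi,\chi_N\rangle|\to\infty$.

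The crucial step is to fix a real-valued $\psi_0\in\DD(\RR^n)$ with $0\notin K_0:=\supp\psi_0$ and
\[
c:=\int_{\RR^n}\varphi_1(y/|y|)\,\mathcal F\psi_0(y)\,dy=\langle\mathcal F h,\psi_0\rangle\neq0.
\]
Such a $\psi_0$ exists: otherwise $\langle\mathcal F h,\psi_0\rangle=0$ for every $\psi_0$ supported away from $0$, so $\supp\mathcal F h\subseteq\{0\}$ and hence $h$ is a polynomial; being bounded, $h$ is constant; and since $V'=\operatorname{int}V^*\neq\emptyset$ forces $V\neq\RR^n$, whence $\supp\varphi_1\subseteq\mathbb S^{n-1}\cap V\subsetneq\mathbb S^{n-1}$, that constant must be $0$, i.e.\ $\varphi_1=0$ a.e.\ --- contradicting $\int_{\mathbb S^{n-1}}\varphi_{1,+}>0$, which is precisely the content of \eqref{con-for-exi-offournotmeas} both when $\kappa_0\in(0,1]$ and when $\kappa_0=\infty$. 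Pick $0<a<b$ with $K_0\subseteq\{\xi:a\le|\xi|\le b\}$, and note $\|\psi_0\|_{L^\infty}>0$.

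Next comes a limiting argument. For $\theta>1$, a change of variables gives $\langle\mathcal F\varphi,\psi_0(\theta^m\,\cdot)\rangle=\int_{\RR^n}\varphi(\theta^m y)\,\mathcal F\psi_0(y)\,dy$; since $h$ is homogeneous of degree $0$, $\varphi(\theta^m y)=\varphi_1(y/|y|)\mathbf 1_{(r\theta^{-m},\infty)}(|y|)\to\varphi_1(y/|y|)$ for a.e.\ $y$, dominated by $\|\varphi_1\|_{L^\infty}|\mathcal F\psi_0|\in L^1(\RR^n)$, so by dominated convergence $\langle\mathcal F\varphi,\psi_0(\theta^m\,\cdot)\rangle\to c\neq0$ as $m\to\infty$. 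Now choose $\theta>b/a$; then the sets $\theta^{-m}K_0$, $m\in\NN$, are pairwise disjoint, and they lie inside $B_1(0)$ once $\theta^{-m}b\le1$, so I may fix $M_0$ with $\theta^{-m}K_0\subseteq B_1(0)$ and $|\langle\mathcal F\varphi,\psi_0(\theta^m\,\cdot)\rangle-c|<|c|/2$ for all $m\ge M_0$. Setting $\chi_N:=\sum_{m=M_0}^{M_0+N}\psi_0(\theta^m\,\cdot)\in\DD_{B_1(0)}(\RR^n)$, the disjointness of the supports gives $\|\chi_N\|_{L^\infty}=\|\psi_0\|_{L^\infty}$ for all $N$, while
\[
|\langle\mathcal F\varphi,\chi_N\rangle|\ \ge\ (N+1)|c|-(N+1)\frac{|c|}{2}\ =\ \frac{(N+1)|c|}{2},
\]
which tends to $\infty$; this contradicts the bound forced by $\mathcal F\varphi$ being a Radon measure, and so $\mathcal F\varphi\notin\mathcal K'(\RR^n)$.

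The one genuinely delicate point is ensuring $c\neq0$ in the second paragraph; the remaining steps are routine bookkeeping with dilations and dominated convergence. I would note that this argument uses the hypotheses only through their consequence $\int_{\mathbb S^{n-1}}\varphi_{1,+}>0$ (equivalently $\varphi_1\not\equiv0$), so I would expect the full quantitative strength of \eqref{def-ofk-for-funconsp}--\eqref{con-for-exi-offournotmeas}, in particular the role of $\kappa_0$ and of the dual cone $V'$, to be needed not in this lemma but in deducing from it the main theorem about Mikhlin multipliers; if the authors' proof does require more here, the additional difficulty would lie in absorbing the contribution of the negative part $\varphi_{1,-}$ when one tests $\mathcal F\varphi$ against nonnegative kernels concentrated along $V'$.
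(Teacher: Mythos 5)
Your argument is correct, and it is a genuinely different (and, as you observe, strictly more elementary) route than the paper's. The paper passes to spherical coordinates, trades $n+1$ powers of the radial variable for derivatives $D_1^{n+1}\chi$ through the auxiliary function $f(\lambda)=\int_\lambda^\infty e^{-i\rho}\rho^{-2}\,d\rho$, integrates by parts $n$ and $n+1$ times, and then tests against a family $\chi_l$ supported on shells concentrated along a ray of $V'$; the logarithmic divergence of $\int\rho'^{-1}\,d\rho'$ produces the contradiction. Because those test functions are of fixed sign and carried by the cone $V'$, the positive and negative parts of $\varphi_1$ enter with the angular weight $(\omega\omega')^{-n}$, and the quantitative hypothesis \eqref{con-for-exi-offournotmeas}, via the constant $\kappa_0^{-n}$, is exactly what keeps the positive contribution dominant. (The paper also treats $n=1$ separately, where the spherical computation degenerates.) Your proof avoids all of this. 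It isolates the degree-$0$ homogeneous function $h(x)=\varphi_1(x/|x|)$, observes that $\mathcal F\varphi-\mathcal Fh$ is continuous so the two can be interchanged modulo $\mathcal K'(\RR^n)$, produces one test function $\psi_0$ with $0\notin\supp\psi_0$ and $\langle\mathcal Fh,\psi_0\rangle\neq 0$ (using only that a bounded degree-$0$ homogeneous function vanishing on an open cone cannot be a nonzero polynomial), and then stacks lacunary dilates $\psi_0(\theta^m\cdot)$ with disjoint supports to get $\chi_N\in\DD_{B_1(0)}(\RR^n)$ with $\|\chi_N\|_{L^\infty}$ constant in $N$ while $|\langle\mathcal F\varphi,\chi_N\rangle|$ grows linearly. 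I checked the details: the change-of-variables identity $\langle\mathcal F\varphi,\psi_0(\theta^m\cdot)\rangle=\int\varphi(\theta^m y)\mathcal F\psi_0(y)\,dy$ is correct in the paper's Fourier conventions, dominated convergence applies since $\mathcal F\psi_0\in L^1(\RR^n)$, and the supports $\theta^{-m}K_0$ are pairwise disjoint once $\theta>b/a$. Your closing remark is also right, and worth emphasizing: your argument uses only $\varphi_1\not\equiv 0$ together with $\supp\varphi_1\subsetneq\Ss^{n-1}$ (the latter forced by $\operatorname{int}V^*\neq\emptyset$); the numerical balance \eqref{con-for-exi-offournotmeas} and the constant $\kappa_0$ play no role. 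Since Proposition \ref{pro-for-not-multfunradcon} and Theorem \ref{mai-the-ofa-insl} invoke the lemma as a black box, they would inherit the same strengthening: the conclusion holds whenever $\operatorname{Re}\psi$ (or $\operatorname{Im}\psi$) is positively homogeneous of order $0$, supported in a cone $V$ with $\operatorname{int}V^*\neq\emptyset$, and not identically zero at infinity.
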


\begin{remark}
Assuming $\supp\varphi_{1,-}\neq \emptyset$, the geometric meaning of $\kappa_0$ is the following. For fixed $\omega'\in V'$, $\inf_{\omega\in\supp\varphi_{1,-}}\omega\omega'$ is the cosine of the largest angle between $\omega'$ and the points in $\supp\varphi_{1,-}$, i.e. it is the cosine of half of the aperture of the smallest circular cone with axis $\omega'$ that contains $\supp\varphi_{1,-}$. The parameter $\kappa_0$ is the cosine of half of the aperture of the smallest circular cone with axis in $\overline{V'}$ that contains $\supp\varphi_{1,-}$ (this cone may not be unique): the bigger $\kappa_0$ is the smaller the aperture is.
\end{remark}

\begin{proof}[Proof of Lemma \ref{lem-for-con-exampl}] The proof of the facts on $\kappa_0$ is straightforward an we omit it. For the rest of the claims, assume first that $n\geq 2$. Clearly $\varphi\in L^{\infty}(\RR^n)$ (its measurability follows from the fact that preimages of nullsets by the map $\RR^n\backslash\{0\}\rightarrow \mathbb{S}^{n-1}$, $x\mapsto x/|x|$, are nullsets). There is $\omega'_0\in\mathbb{S}^{n-1}\cap V'$ such that \eqref{con-for-exi-offournotmeas} is satisfied with $\inf_{\omega\in\supp\varphi_{1,-}}\omega\omega'_0$ in place of $\kappa_0$. Pick an open cone $V''$ containing $\omega'_0$ and satisfying $\overline{V''}\subseteq V'\cup\{0\}$ such that \eqref{con-for-exi-offournotmeas} holds true with
$$
\kappa'_0:=\left\{
\begin{array}{l}
  \inf_{\omega'\in \mathbf{S}^{n-1}\cap \overline{V''}}\inf_{\omega\in\supp\varphi_{1,-}}\omega\omega',\,\,\, \mbox{if}\,\,\, \supp\varphi_{1,-}\neq\emptyset,\\
  1,\,\,\, \mbox{if}\,\,\, \supp\varphi_{1,-}=\emptyset,
\end{array}
\right.
$$
in place of $\kappa_0$. Without loss of generality, we can assume $\omega'_0=(1,0,\ldots,0)$ for otherwise we can employ rotation to reduce the problem to this. The compactness of $\Ss^{n-1}$ implies that there is $\varepsilon_0>0$ such that $\Ss^{n-1}\cap V\subseteq \{\omega=(\omega_1,\ldots,\omega_n)\in\Ss^{n-1}\,|\, \omega_1\geq \varepsilon_0\}$. Our goal is to calculate $\mathcal{F}\varphi$. For $\chi\in\SSS(\RR^n)$, we employ spherical coordinates to infer
\begin{align*}
\langle\mathcal{F}\varphi,\chi\rangle&=\int_{\Ss^{n-1}\cap V} \int_r^{\infty}\varphi_1(\omega)\rho^{n-1}\mathcal{F}\chi(\rho\omega)d\rho d\omega\\
&=\int_{\Ss^{n-1}\cap V}\int_r^{\infty}\varphi_1(\omega)\omega_1^{-n-1}\rho^{-2}\mathcal{F}(D^{n+1}_1\chi)(\rho\omega)d\rho d\omega.
\end{align*}
Setting $\phi_1(\omega):=\varphi_1(\omega)/\omega_1^{n+1}$ and $\psi(x):=\phi_1(x/|x|)\mathbf{1}_{(r,\infty)}(|x|)/|x|^{n+1}$, we deduce that $\phi_1\in L^{\infty}(\Ss^{n-1})$ with $\phi_1=0$ a.e. on $\Ss^{n-1}\backslash V$, $\psi\in L^1(\RR^n)\cap L^{\infty}(\RR^n)$ and $\mathcal{F}\varphi=(-1)^{n+1}D^{n+1}_1\mathcal{F}\psi$. Notice that
$$
\mathcal{F}\psi(\xi)=\int_{\Ss^{n-1}\cap V}\int_r^{\infty}\phi_1(w)e^{-i\rho\omega\xi}\rho^{-2}d\rho d\omega
$$
and consequently
$$
\langle\mathcal{F}\varphi,\chi\rangle=\int_{\Ss^{n-1}\cap V}\int_{\RR^n}\int_r^{\infty}\phi_1(\omega)e^{-i\rho\omega\xi}\rho^{-2}D^{n+1}_{\xi_1}\chi(\xi)d\rho d\xi d\omega,\quad \chi\in\SSS(\RR^n).
$$
Set $f(\lambda):=\int_{\lambda}^{\infty} e^{-i\rho}\rho^{-2}d\rho$, $\lambda>0$; clearly $f\in\mathcal{C}^{\infty}(0,\infty)$. For $\chi\in\DD(V')$, we have
\begin{align*}
\langle\mathcal{F}\varphi,\chi\rangle&=\int_{\Ss^{n-1}\cap V}\int_{V'}\phi_1(\omega)(\omega\xi)f(r\omega\xi)D^{n+1}_{\xi_1}\chi(\xi) d\xi d\omega\\
&=(-1)^{n+1}\int_{\Ss^{n-1}\cap V}\int_{V'}\varphi_1(\omega)(\omega\xi)r^{n+1}f^{(n+1)}(r\omega\xi)\chi(\xi) d\xi d\omega\\
&{}\quad+(-1)^{n+1}(n+1) \int_{\Ss^{n-1}\cap V}\int_{V'}\varphi_1(\omega)r^nf^{(n)}(r\omega\xi)\chi(\xi) d\xi d\omega\\
&=\sum_{k=0}^n{n\choose k}i^{n-k}(k+1)!\int_{\Ss^{n-1}\cap V}\int_{V'}\varphi_1(\omega)r^{n-k-1}e^{-ir\omega\xi}(\omega\xi)^{-k-1}\chi(\xi) d\xi d\omega\\
&{}\quad-(n+1) \sum_{k=0}^{n-1}{{n-1}\choose k}i^{n-k-1}(k+1)!\int_{\Ss^{n-1}\cap V}\int_{V'}\varphi_1(\omega)r^{n-k-2}e^{-ir\omega\xi}(\omega\xi)^{-k-2}\chi(\xi) d\xi d\omega\\
&=\sum_{k=0}^{n-2}{n\choose k}i^{n-k}(k+1)!\int_{\Ss^{n-1}\cap V}\int_{V'}\varphi_1(\omega)r^{n-k-1}e^{-ir\omega\xi}(\omega\xi)^{-k-1}\chi(\xi) d\xi d\omega\\
&{}\quad-(n+1) \sum_{k=0}^{n-3}{{n-1}\choose k}i^{n-k-1}(k+1)!\int_{\Ss^{n-1}\cap V}\int_{V'}\varphi_1(\omega)r^{n-k-2}e^{-ir\omega\xi}(\omega\xi)^{-k-2}\chi(\xi) d\xi d\omega\\
&{}\quad+i(n-1)!\int_{\Ss^{n-1}\cap V}\int_{V'}\varphi_1(\omega)e^{-ir\omega\xi}(\omega\xi)^{-n}\chi(\xi) d\xi d\omega;
\end{align*}
of course, when $n=2$ the second sum is vacuous. Assume that $\mathcal{F}\varphi\in \mathcal{K}'(\RR^n)$. Then, there is $C>1$ such that $|\langle\mathcal{F}\varphi,\chi\rangle|\leq C\|\chi\|_{L^{\infty}(\RR^n)}$, for all $\chi\in\DD(\RR^n)$ with $\supp\chi\subseteq B_1(0)$.
The compactness of $\Ss^{n-1}$ implies that there is $\varepsilon'>0$ such that $\omega\xi\geq \varepsilon'|\xi|$, $\omega\in \Ss^{n-1}\cap V$, $\xi\in\overline{V''}$. When $\chi\in\DD(V'')$ with $\supp\chi\subseteq B_1(0)$, all terms in the first and second sum satisfy the following estimate
\begin{multline*}
\left|\int_{\Ss^{n-1}\cap V}\int_{V'}\varphi_1(\omega)e^{-ir\omega\xi}(\omega\xi)^{-k}\chi(\xi) d\xi d\omega\right|\\
\leq \varepsilon'^{-k}\|\chi\|_{L^{\infty}(\RR^n)}\int_{\Ss^{n-1}\cap V}\int_{B_1(0)}|\varphi_1(\omega)||\xi|^{-k}d\xi d\omega=C'_k\|\chi\|_{L^{\infty}(\RR^n)},
\end{multline*}
when $1\leq k\leq n-1$. Consequently, there is $C''>1$ such that
$$
\left|\int_{\Ss^{n-1}\cap V}\int_{V'}\varphi_1(\omega)e^{-ir\omega\xi}(\omega\xi)^{-n}\chi(\xi) d\xi d\omega\right|\leq C''\|\chi\|_{L^{\infty}(\RR^n)},\,\, \chi\in \DD(V''),\, \supp\chi\subseteq B_1(0).
$$
Since \eqref{con-for-exi-offournotmeas} holds true with $\kappa_0'$, there is $s>1$ such that
\begin{equation}\label{ine-for-int-ofvarpgunc}
\cos(1/s)\|\varphi_{1,+}\|_{L^1(\Ss^{n-1})}>\kappa_0'^{-n}\|\varphi_{1,-}\|_{L^1(\Ss^{n-1})}.
\end{equation}
For each $l\in\ZZ_+$, $l\geq 2s+1$, pick $\theta_l\in\DD(1/(2l(r+1)),1/(s(r+1)))$ such that $0\leq \theta_l\leq 1$ and $\theta_l=1$ on $[1/(l(r+1)),1/(2s(r+1))]$. Choose $\widetilde{\chi}\in\DD(V'')$ which satisfies $0\leq\widetilde{\chi}\leq 1$ and $\widetilde{\chi}(1,0,\ldots,0)=1$ and define $\chi_l(\xi):=\widetilde{\chi}(\xi/|\xi|)\theta_l(|\xi|)$. Clearly $\chi_l\in\DD'(V'')$ with $\supp\chi_l\subseteq B_1(0)$. The above implies that
\begin{align*}
C''&\geq \int_{\Ss^{n-1}\cap V}\int_{V''}\varphi_1(\omega)\cos(r\omega\xi)(\omega\xi)^{-n}\chi_l(\xi) d\xi d\omega\\
&=\int_{\Ss^{n-1}\cap V}\int_{\Ss^{n-1}\cap V''} \int_0^{1/(s(r+1))} \varphi_{1,+}(\omega)\cos(r\rho'\omega\omega')(\omega\omega')^{-n}\rho'^{-1}\widetilde{\chi}(\omega')\theta_l(\rho') d\rho'd\omega' d\omega\\
&{}\quad -\int_{\Ss^{n-1}\cap V}\int_{\Ss^{n-1}\cap V''} \int_0^{1/(s(r+1))} \varphi_{1,-}(\omega)\cos(r\rho'\omega\omega')(\omega\omega')^{-n}\rho'^{-1}\widetilde{\chi}(\omega')\theta_l(\rho') d\rho'd\omega' d\omega\\
&\geq \cos(1/s)\|\varphi_{1,+}\|_{L^1(\Ss^{n-1})}\|\widetilde{\chi}\|_{L^1(\Ss^{n-1})} \int_{1/(l(r+1))}^{1/(2s(r+1))} \frac{d\rho'}{\rho'}\\
&{}\quad -\kappa_0'^{-n}\|\varphi_{1,-}\|_{L^1(\Ss^{n-1})}\|\widetilde{\chi}\|_{L^1(\Ss^{n-1})} \int_{1/(2l(r+1))}^{1/(s(r+1))} \frac{d\rho'}{\rho'}\\
&= \|\widetilde{\chi}\|_{L^1(\Ss^{n-1})}\left(\cos(1/s)\|\varphi_{1,+}\|_{L^1(\Ss^{n-1})}\ln(l/(2s))-\kappa_0'^{-n} \|\varphi_{1,-}\|_{L^1(\Ss^{n-1})}\ln(2l/s)\right),
\end{align*}
for all $l\in\ZZ_+$, $l\geq 2s+1$. This is a contradiction since the very last term tends to $\infty$ as $l\rightarrow\infty$ in view of \eqref{ine-for-int-ofvarpgunc} and the proof is complete.\\
\indent It remains to show the claim when $n=1$. Without loss in generality we can assume that $V=[0,\infty)$ and thus $\varphi(x)=c\mathbf{1}_{(r,\infty)}(x)$, $x\in\RR$, with some $c>0$. Since $\mathcal{F}(\operatorname{sgn}(\cdot))=-2i\operatorname{PV}(1/\cdot)$ \footnote{$\operatorname{PV}(1/\xi)$ is the principle value of $1/\xi$ defined by $\langle\operatorname{PV}(1/\cdot),\phi\rangle=\lim_{\varepsilon\rightarrow0^+}\int_{|\xi|\geq \varepsilon}\frac{\phi(\xi)}{\xi}d\xi$, $\phi\in\DD(\RR)$.} and $\operatorname{sgn}(x-r)=\frac{2}{c}\varphi(x)-1$, $x\in\RR\backslash\{r\}$, we infer $\mathcal{F}\varphi=-ice^{-ir\,\cdot}\operatorname{PV}(1/\cdot)+c\pi\delta$. We are going to show that $\operatorname{PV}(1/\cdot)\not\in\mathcal{K}'(\RR)$ which will imply that $\mathcal{F}\varphi\not\in\mathcal{K}'(\RR)$. Assume the contrary. Then there is $C>0$ such that
$$
|\langle \operatorname{PV}(1/\cdot),\chi\rangle|\leq C\|\chi\|_{L^{\infty}(\RR)},\quad \chi\in \DD_{[-1,1]}(\RR).
$$
For each $k\in\ZZ_+$, $k\geq 3$, pick $\chi_k\in\DD(0,1)$ such that $0\leq \chi_k\leq 1$ and $\chi_k=1$ on $[1/k,1/2]$. Notice that
$$
C=C\|\chi_k\|_{L^{\infty}(\RR)}\geq|\langle \operatorname{PV}(1/\cdot),\chi_k\rangle|=\int_0^1\frac{\chi_k(x)}{x}dx\geq \int_{1/k}^{1/2}\frac{dx}{x}=\ln(k/2),\quad k\geq 3,
$$
which is a contradiction and the proof is complete.
\end{proof}

\begin{remark}\label{rem-for-non-negfunconfs}
Any nonnegative $\varphi_1\in L^{\infty}(\Ss^{n-1})\backslash\{0\}$ satisfies \eqref{con-for-exi-offournotmeas} and the lemma is applicable to it.
\end{remark}

\begin{remark}
Let $V$ and $V'$ be as in the lemma and set
$$
\kappa_V:=\sup_{\omega'\in \Ss^{n-1}\cap V'}(\inf_{\omega\in \Ss^{n-1}\cap V}\omega\omega').
$$
Then $\kappa_V\in(0,1]$. If $\varphi_1\in L^{\infty}(\Ss^{n-1})$ is real-valued and satisfies $\supp\varphi_1\subseteq \Ss^{n-1}\cap V$ and
\begin{equation}\label{con-for-exi-offournotmeas11}
\int_{\mathbb{S}^{n-1}}\varphi_{1,+}(\omega)d\omega>\kappa_V^{-n}\int_{\mathbb{S}^{n-1}}\varphi_{1,-}(\omega)d\omega,
\end{equation}
then it satisfies \eqref{con-for-exi-offournotmeas} and the claim in the lemma is applicable to this $\varphi_1$.
\end{remark}

\begin{definition}
The function $\psi:\RR^n\rightarrow \CC$ is said to be positively homogeneous of order $0$ outside of $B_R(0)$ for some $R>0$ if $\psi(\lambda\xi)=\psi(\xi)$, $|\xi|> R$, $\lambda>1$. We abbreviate it as positively homogeneous of order $0$ when the number $R$ is not important.
\end{definition}

\begin{remark}
If $\psi\in\mathcal{C}^{\infty}(\RR^n)$ is positively homogeneous of order $0$, then the function $\RR^{2n}\rightarrow \CC$, $(x,\xi)\mapsto \psi(\xi)$, belongs to $S^0_{1,0}(\RR^{2n})$, i.e. it is a Mikhlin multiplier.
\end{remark}

\begin{proposition}\label{pro-for-not-multfunradcon}
Let $V\subseteq \RR^n$ be a closed cone such that $V\backslash\{0\}\neq \emptyset$ and $V':=\operatorname{int}V^*\neq \emptyset$. Let $\psi\in L^{\infty}(\RR^n)$ be such that $\operatorname{Re}\psi$ is positively homogeneous of order $0$ and $\supp(\operatorname{Re}\psi)\subseteq V$. Define $\varphi_1:\Ss^{n-1}\rightarrow \RR$, $\varphi_1(\omega):=\lim_{r\rightarrow \infty}\operatorname{Re}\psi(r\omega)$. Then $\varphi_1\in L^{\infty}(\Ss^{n-1})$ and if $\varphi_1$ or $-\varphi_1$ satisfies \eqref{con-for-exi-offournotmeas} then $\mathcal{F}\psi\not\in\mathcal{K}'(\RR^n)$.\\
\indent If this is satisfied for $\operatorname{Im}\psi$ instead of $\operatorname{Re}\psi$, then again $\mathcal{F}\psi\not\in\mathcal{K}'(\RR^n)$.
\end{proposition}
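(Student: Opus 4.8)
The plan is to deduce this from Lemma~\ref{lem-for-con-exampl} by splitting $\operatorname{Re}\psi$ (resp.\ $\operatorname{Im}\psi$) into a ``tail'' which is precisely the function treated in that lemma plus a compactly supported error whose Fourier transform is a genuine Radon measure.

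First I would establish the properties of $\varphi_1$. Fix $R>0$ such that $\operatorname{Re}\psi$ is positively homogeneous of order $0$ outside $B_R(0)$. Then for each $\omega\in\Ss^{n-1}$ the function $r\mapsto\operatorname{Re}\psi(r\omega)$ is constant on $(R,\infty)$, so $\varphi_1(\omega)=\operatorname{Re}\psi(r\omega)$ for every $r>R$; equivalently $\operatorname{Re}\psi(\xi)=\varphi_1(\xi/|\xi|)$ whenever $|\xi|>R$. In particular $\varphi_1$ is real-valued with $|\varphi_1|\leq\|\operatorname{Re}\psi\|_{L^{\infty}(\RR^n)}$, its measurability follows from a routine Fubini argument applied to $\operatorname{Re}\psi$ on the annulus $\{R<|\xi|<R+1\}$ (identified with $\Ss^{n-1}\times(R,R+1)$), so $\varphi_1\in L^{\infty}(\Ss^{n-1})$; and since $\operatorname{Re}\psi$ vanishes on the open set $\RR^n\setminus V$ we get $\supp\varphi_1\subseteq\Ss^{n-1}\cap V$. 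Hence both $\varphi_1$ and $-\varphi_1$ are admissible input for Lemma~\ref{lem-for-con-exampl} with the given $V$.

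Next I would set $\eta(\xi):=\varphi_1(\xi/|\xi|)\mathbf{1}_{(R,\infty)}(|\xi|)$, which is exactly the function $\varphi$ in Lemma~\ref{lem-for-con-exampl} with $r=R$. If $\varphi_1$ satisfies \eqref{con-for-exi-offournotmeas}, the lemma gives $\mathcal F\eta\notin\mathcal K'(\RR^n)$; if instead $-\varphi_1$ does, applying the lemma to $-\varphi_1$ gives $\mathcal F(-\eta)\notin\mathcal K'(\RR^n)$, and since $\mathcal K'(\RR^n)$ is a vector space this again yields $\mathcal F\eta\notin\mathcal K'(\RR^n)$. On the other hand $\operatorname{Re}\psi-\eta$ equals $\operatorname{Re}\psi$ on $\overline{B_R(0)}$ and vanishes for $|\xi|>R$ (because there $\eta=\varphi_1(\xi/|\xi|)=\operatorname{Re}\psi(\xi)$), so it is a bounded function with support in $\overline{B_R(0)}$, hence lies in $L^1(\RR^n)$; therefore $\mathcal F(\operatorname{Re}\psi-\eta)$ is a bounded continuous function, in particular an element of $\mathcal K'(\RR^n)$. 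Adding, $\mathcal F(\operatorname{Re}\psi)=\mathcal F\eta+\mathcal F(\operatorname{Re}\psi-\eta)\notin\mathcal K'(\RR^n)$.

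Finally I would transfer this from $\operatorname{Re}\psi$ to $\psi$. Suppose, towards a contradiction, that $\mu:=\mathcal F\psi\in\mathcal K'(\RR^n)$. Then the reflection $\check\mu$ and its complex conjugate $\overline{\check\mu}$ are again Radon measures, and since $\mathcal F(\overline\psi)=\overline{\check\mu}$ in $\SSS'(\RR^n)$ while $\operatorname{Re}\psi=\tfrac12(\psi+\overline\psi)$, we obtain $\mathcal F(\operatorname{Re}\psi)=\tfrac12(\mu+\overline{\check\mu})\in\mathcal K'(\RR^n)$, contradicting the previous step. The assertion with $\operatorname{Im}\psi$ in place of $\operatorname{Re}\psi$ is proved identically using $\operatorname{Im}\psi=\tfrac1{2i}(\psi-\overline\psi)$, so that $\mathcal F(\operatorname{Im}\psi)=\tfrac1{2i}(\mu-\overline{\check\mu})$. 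The real content of the proposition is entirely contained in Lemma~\ref{lem-for-con-exampl}; here the only points requiring (mild) care are the measurability of $\varphi_1$ and the elementary fact that the class of Radon measures is stable under reflection, complex conjugation, and adding Fourier transforms of $L^1$ functions, so I do not anticipate any essential obstacle.
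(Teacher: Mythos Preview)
Your proposal is correct and follows essentially the same route as the paper: reduce to Lemma~\ref{lem-for-con-exampl} by writing $\operatorname{Re}\psi$ as the homogeneous tail $\eta$ plus a compactly supported bounded remainder whose Fourier transform lies in $\mathcal{K}'(\RR^n)$, and then pass from $\operatorname{Re}\psi$ to $\psi$ via $\mathcal{F}\overline{\psi}=(\overline{\mathcal{F}\psi})\check{}$. The only cosmetic difference is that the paper invokes smoothness of the Fourier transform of the compactly supported piece while you (equally correctly) use that it is bounded and continuous via $L^1$.
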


\begin{proof} We only show the proposition when $\operatorname{Re}\psi$ satisfies the assumptions as the proof when $\operatorname{Im}\psi$ satisfies the assumptions is analogous. Let $R>0$ be large enough such that $\psi(\lambda\xi)=\psi(\xi)$, $|\xi|> R$, $\lambda>1$. Then $\varphi_1(\omega)=\operatorname{Re}\psi((R+1)\omega)$, $\omega\in\Ss^{n-1}$, and the measurability of $\varphi_1$ follows (essentially) by the definition of the spherical Lebesgue measure; it is straightforward to prove that $\varphi_1\in L^{\infty}(\Ss^{n-1})$ (by contradiction). Without loss in generality, we can assume that $\varphi_1$ satisfies \eqref{con-for-exi-offournotmeas}. We apply Lemma \ref{lem-for-con-exampl} for this $\varphi_1$ to deduce that $\mathcal{F}\varphi\not\in\mathcal{K}'(\RR^n)$ with $\varphi(x):=\varphi_1(x/|x|)\mathbf{1}_{(R,\infty)}(|x|)=\operatorname{Re}\psi(x)\mathbf{1}_{(R,\infty)}(|x|)$. Since $x\mapsto \operatorname{Re}\psi(x)(1-\mathbf{1}_{(R,\infty)}(|x|))$ has compact support, its Fourier transform is smooth and hence in $\mathcal{K}'(\RR^n)$. We deduce that $\mathcal{F}(\operatorname{Re}\psi)\not\in\mathcal{K}'(\RR^n)$. If $\mathcal{F}\psi\in\mathcal{K}'(\RR^n)$ then $\mathcal{F}\overline{\psi}=(\overline{\mathcal{F}\psi})\check{}\in\mathcal{K}'(\RR^n)$ and thus $\mathcal{F}(\operatorname{Re}\psi)=(\mathcal{F}\psi+\mathcal{F}\overline{\psi})/2\in\mathcal{K}'(\RR^n)$ which is a contradiction and the proof is complete.
\end{proof}

\begin{remark}
If the function $\varphi_1$ in the proposition is nonnegative or nonpositive and $\varphi_1\neq 0$ on a set with positive measure, then $\mathcal{F}\psi\not\in\mathcal{K}'(\RR^n)$ (cf. Remark \ref{rem-for-non-negfunconfs}).
\end{remark}

Next, we show a local variant of the H\"ormander result \cite[Theorem 1.4]{hor-p1} on the characterisations of translation invariant operators on $L^1$ and $L^{\infty}$; recall that an operator $A:X\rightarrow Y$ between the translation invariant distribution spaces $X$ and $Y$ on $\RR^n$ is said to be translation invariant if it commutes with all translations, i.e. $AT_x=T_xA$, $x\in\RR^n$, where $T_x$ is the translation by $x$ defined as $T_xf:=f(\cdot-x)$. Recall that the convolution $*:\EE'(\RR^n)\times\DD'(\RR^n)\rightarrow \DD'(\RR^n)$ is well-defined and hypocontinuous. If $\mu\in\mathcal{K}'(\RR^n)$ is positive (i.e. $\langle \mu,\varphi\rangle\geq 0$ for all nonnegative $\varphi\in\mathcal{K}(\RR^n)$) and $f\in L^1_{\comp}(\RR^n)$ then it is straightforward to check that the distribution $f*\mu$ is in fact in $L^1_{\loc}(\RR^n)$ and
$$
f*\mu(x)=\int_{\RR^n}f(x-y)d\mu(y),\quad \mbox{a.a.}\,\, x\in\RR^n;
$$
(the Fubini theorem implies that $f*\mu$ is measurable and $f*\mu\in L^1_{\loc}(\RR^n)$; of course, one takes any Borel measurable representative of $f$). For general $\mu\in\mathcal{K}'(\RR^n)$, one writes
\begin{equation}\label{dec-ofm-int-mindofrealanim}
\mu=\mu_{\operatorname{Re},+}-\mu_{\operatorname{Re},-}+i(\mu_{\operatorname{Im},+}-\mu_{\operatorname{Im},-}),
\end{equation}
where $\mu_{\operatorname{Re},+}$, $\mu_{\operatorname{Re},-}$, $\mu_{\operatorname{Im},+}$ and $\mu_{\operatorname{Im},-}$ are the unique positive Radon measures such that $\mu_{\operatorname{Re},+}-\mu_{\operatorname{Re},-}$ and $\mu_{\operatorname{Im},+}-\mu_{\operatorname{Im},-}$ are the minimal decompositions of the real and imaginary part of $\mu$ respectively (see \cite[Section 4.3]{edw-bo}), and $f*\mu$ can be given as a sum of four integrals as above since
$
f*\mu=f*\mu_{\operatorname{Re},+}-f*\mu_{\operatorname{Re},-}+i(f*\mu_{\operatorname{Im},+}-f*\mu_{\operatorname{Im},-}).
$
It is straightforward to check that for each $\mu\in \mathcal{K}'(\RR^n)$, the map $f\mapsto f*\mu$ is well-defined and continuous as a map
$$
L^1_{\comp}(\RR^n)\rightarrow L^1_{\loc}(\RR^n),\quad L^{\infty}_{\comp}(\RR^n)\rightarrow L^{\infty}_{\loc}(\RR^n)\quad \mbox{and}\quad \mathcal{K}(\RR^n)\rightarrow \mathcal{C}(\RR^n).
$$
The local variant of the H\"ormander theorem \cite[Theorem 1.4]{hor-p1} is the following.

\begin{proposition}\label{spa-ofm-for-lolicont}
${}$
\begin{itemize}
\item[$(a)$] The map $\mathcal{K}'(\RR^n)\rightarrow \mathcal{L}_b(L^1_{\comp}(\RR^n), L^1_{\loc}(\RR^n))$, $\mu\mapsto (f\mapsto f*\mu)$, is a topological imbedding whose image is the space of continuous operators $L^1_{\comp}(\RR^n)\rightarrow L^1_{\loc}(\RR^n)$ that commute with all translations.
\item[$(b)$] The map $\mathcal{K}'(\RR^n)\rightarrow \mathcal{L}_b(\mathcal{K}(\RR^n), \mathcal{C}(\RR^n))$, $\mu\mapsto (\varphi\mapsto \varphi*\mu)$, is a topological imbedding whose image is the space of continuous operators $\mathcal{K}(\RR^n)\rightarrow\mathcal{C}(\RR^n)$ that commute with all translations.
\item[$(c)$] The map $\mathcal{K}'(\RR^n)\rightarrow \mathcal{L}_b(L^{\infty}_{\comp}(\RR^n), L^{\infty}_{\loc}(\RR^n))$, $\mu\mapsto (f\mapsto f*\mu)$, is a topological imbedding. If $Q:L^{\infty}_{\comp}(\RR^n)\rightarrow L^{\infty}_{\loc}(\RR^n)$ is continuous and commutes with all translations, then there is $\mu\in\mathcal{K}'(\RR^n)$ such that $Q\varphi=\varphi*\mu$, $\varphi\in\mathcal{K}(\RR^n)$.
\end{itemize}
\end{proposition}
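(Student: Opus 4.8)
I would prove the ``injectivity and topological imbedding'' assertion for all three maps at once, and then settle the surjectivity statements one by one, case $(c)$ being the delicate point.

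\emph{Imbedding.} A bounded subset of $L^1_{\comp}(\RR^n)$, $L^\infty_{\comp}(\RR^n)$, resp. $\mathcal{K}(\RR^n)$, lies in a ball of $L^1(B_m(0))$, $L^\infty(B_m(0))$, resp. $\mathcal{K}_{B_m(0)}(\RR^n)$ for some $m$, so the seminorms of $\mathcal{L}_b$ pulled back along $\mu\mapsto(f\mapsto f*\mu)$ are the quantities $N_{m,M}(\mu):=\sup\{\|f*\mu\|_{X(B_M(0))}\}$, the supremum being over $f$ in the corresponding unit ball and $X\in\{L^1,L^\infty,\mathcal{C}\}$. Interchanging the order of integration gives $N_{m,M}(\mu)\leq|\mu|(B_{M+m}(0))$, which already shows that the three maps are continuous. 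For the reverse inequality fix a mollifier $\rho_\varepsilon:=\varepsilon^{-n}\rho(\cdot/\varepsilon)$ with $\rho\in\DD(\RR^n)$, $\rho\geq0$, $\int\rho=1$, $\supp\rho\subseteq B_1(0)$. In case $(a)$, $\rho_\varepsilon*\mu\to\mu$ in $\DD'(\RR^n)$ and the total variation is lower semicontinuous along such sequences, so $|\mu|(\operatorname{int}B_M(0))\leq\liminf_{\varepsilon\to0}\|\rho_\varepsilon*\mu\|_{L^1(B_M(0))}\leq N_{1,M}(\mu)$. In cases $(b)$ and $(c)$, for $\phi\in\DD(\operatorname{int}B_M(0))$ with $|\phi|\leq1$ one has $\langle\mu,\phi\rangle=(\check\phi*\mu)(0)$ and $\check\phi*\mu$ is a continuous function, whence $|\langle\mu,\phi\rangle|\leq\|\check\phi*\mu\|_{L^\infty(B_1(0))}\leq N_{M,1}(\mu)$, and taking the supremum over $\phi$ yields $|\mu|(\operatorname{int}B_M(0))\leq N_{M,1}(\mu)$. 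Since the seminorms $\mu\mapsto|\mu|(B_M(0))$ and $\mu\mapsto|\mu|(\operatorname{int}B_M(0))$ generate the topology of $\mathcal{K}'(\RR^n)$, these two estimates show that each of the three maps is a topological imbedding; injectivity follows, and translation invariance of the operators in the image is immediate from $(f*\mu)(\cdot-x)=f(\cdot-x)*\mu$.

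\emph{Surjectivity in $(a)$ and $(b)$.} Let $P\colon L^1_{\comp}(\RR^n)\to L^1_{\loc}(\RR^n)$ be continuous and translation invariant. Writing $\phi*\psi=\int\phi(y)\,T_y\psi\,dy$ as an $L^1_{\comp}(\RR^n)$-valued integral and commuting $P$ with it gives the identity $P(\phi*\psi)=\phi*(P\psi)$ for $\phi,\psi\in\DD(\RR^n)$. Continuity of $P$ gives $\|Pf\|_{L^1(B_M(0))}\leq C_{m,M}\|f\|_{L^1}$ for $\supp f\subseteq B_m(0)$, so $\{P\rho_{1/k}\}_k$ is bounded in $L^1(B_M(0))\subseteq\mathcal{M}(\overline{B_M(0)})$ for every $M$; as $C(\overline{B_M(0)})$ is separable, a standard diagonal argument produces a subsequence with $P\rho_{1/k_j}\to\mu$ weak$^*$ on every $\overline{B_M(0)}$, $\mu\in\mathcal{K}'(\RR^n)$. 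Putting $\psi=\rho_{1/k_j}$ in the identity and letting $j\to\infty$ (the left side tends to $P\phi$ in $L^1_{\loc}(\RR^n)$, the right side to $\phi*\mu$ in $\DD'(\RR^n)$) gives $P\phi=\phi*\mu$ on $\DD(\RR^n)$, hence on all of $L^1_{\comp}(\RR^n)$ by density and continuity. For $(b)$, given $Q\colon\mathcal{K}(\RR^n)\to\mathcal{C}(\RR^n)$ continuous and translation invariant, I would argue more directly: $g\mapsto(Q\check g)(0)$ is a continuous functional on $\mathcal{K}(\RR^n)$, hence an element $\mu\in\mathcal{K}'(\RR^n)$, and translation invariance yields $(Qg)(x)=\bigl(T_{-x}(Qg)\bigr)(0)=\bigl(Q(T_{-x}g)\bigr)(0)=\langle\mu,T_x\check g\rangle=(g*\mu)(x)$.

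\emph{Surjectivity in $(c)$, and the main obstacle.} Here the scheme of $(a)$ fails at the outset: the mollifier $\rho_\varepsilon$ is unbounded in $L^\infty$, so $\{Q\rho_\varepsilon\}$ need not be bounded and no weak$^*$ limit is available. Instead, for $Q\colon L^\infty_{\comp}(\RR^n)\to L^\infty_{\loc}(\RR^n)$ continuous and translation invariant, compose $\DD(\RR^n)\hookrightarrow L^\infty_{\comp}(\RR^n)\xrightarrow{Q}L^\infty_{\loc}(\RR^n)\hookrightarrow\DD'(\RR^n)$ and invoke the Schwartz kernel theorem to get $K\in\DD'(\RR^{2n})$ with $\langle Q\phi,\chi\rangle=\langle K,\phi\otimes\chi\rangle$; invariance of $Q$ under translations forces $K$ to be invariant under the diagonal translations $(x,y)\mapsto(x+z,y+z)$, hence, after passing to the variables $x-y$ and $y$ (in which $K$ is invariant under all translations in the second block), a short computation gives $Q\phi=\phi*\mu$ on $\DD(\RR^n)$ for a unique $\mu\in\DD'(\RR^n)$. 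Now $\phi*\mu$ is smooth and $(\phi*\mu)(0)=\langle\mu,\check\phi\rangle$, so the bound $\|Q\phi\|_{L^\infty(B_1(0))}\leq C_{m,1}\|\phi\|_{L^\infty}$ for $\supp\phi\subseteq B_m(0)$ forces $|\langle\mu,\check\phi\rangle|\leq C_{m,1}\|\phi\|_{L^\infty}$ for all such $\phi$; thus $\mu$ is a bounded sup-norm functional on $\DD(\operatorname{int}B_m(0))$ for each $m$, i.e. $\mu\in\mathcal{K}'(\RR^n)$. Finally one extends $Q\varphi=\varphi*\mu$ from $\DD(\RR^n)$ to $\mathcal{K}(\RR^n)$ by approximating $\varphi$ uniformly by $\varphi*\rho_\delta\in\DD(\RR^n)$ with supports inside a fixed compact set and using continuity of both $Q$ and $\varphi\mapsto\varphi*\mu$ on $L^\infty_{\comp}(\RR^n)$. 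The main obstacle is exactly this last case: one cannot run the mollifier/weak$^*$ argument, so one must pass through the kernel theorem and then check by hand that the kernel is a genuine Radon measure; one also has to keep in mind that no more than ``$Q\varphi=\varphi*\mu$ for $\varphi\in\mathcal{K}(\RR^n)$'' can be asserted, since $\mathcal{K}(\RR^n)$ is not dense in $L^\infty_{\comp}(\RR^n)$.
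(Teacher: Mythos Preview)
Your argument is correct; the route differs from the paper's in several places, and it is worth recording where.

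For the imbedding, the paper decomposes $\mu$ into its four positive parts and bounds each pairing $\langle\mu_{\operatorname{Re},+},\varphi\rangle$, etc., by $\sup_{|\phi|\leq\varphi}|\langle\mu,\phi\rangle|$, thereby controlling $\|f*\mu\|_{L^1(K')}$ by a bounded-set seminorm of $\mu$; openness is then obtained via the mollifier bound $|\langle\mu,\psi\rangle|\leq\sup_k\|\chi_k*\mu\|_{L^1(K)}\|\psi\|_{L^\infty}$. Your version, using $N_{m,M}(\mu)\leq|\mu|(B_{M+m}(0))$ for continuity and lower semicontinuity of total variation (case $(a)$) or the identity $\langle\mu,\phi\rangle=(\check\phi*\mu)(0)$ (cases $(b)$, $(c)$) for openness, is equivalent and somewhat cleaner. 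For surjectivity in $(b)$, your direct argument via the point-evaluation functional $g\mapsto(Q\check g)(0)$ is shorter than the paper's mollifier argument. The one place where your commentary is slightly off is $(c)$: you say the scheme of $(a)$ ``fails at the outset'' because $\rho_\varepsilon$ is unbounded in $L^\infty$, but the paper shows that the mollifier approach \emph{does} go through for $(b)$ and $(c)$ once one swaps the roles via $\langle\chi_k*u,\varphi\rangle=\langle\check\varphi_m*u,\check\chi_k\rangle$ and uses that $\check\varphi_m*u=Q\check\varphi_m$ lies in $L^\infty_{\loc}$ (resp.\ $\mathcal{C}$); this gives weak boundedness of $\{\chi_k*u\}$ in $\mathcal{K}'$ without ever needing $\chi_k$ bounded in the source space. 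Your alternative via the Schwartz kernel theorem together with the sup-norm bound $|\langle\mu,\check\phi\rangle|=|(\phi*\mu)(0)|\leq\|Q\phi\|_{L^\infty(B_1(0))}$ is nonetheless correct and arguably more transparent about why $\mu$ is a Radon measure.
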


\begin{proof} The maps in $(a)$, $(b)$ and $(c)$ are well-defined in view of the above comments. We only show the continuity for the map in $(a)$ as the proofs of the continuity for the rest are analogous. Let $B$ be a bounded subset of $L^1_{\comp}(\RR^n)$ and $K'$ a compact set in $\RR^n$. We need to bound $\sup_{f\in B}\|f*\mu\|_{L^1(K')}$ by a seminorm of $\mu$ in $\mathcal{K}'(\RR^n)$. There is a compact set $K\subseteq \RR^n$ such that $B$ is a bounded subset of $L^1_K(\RR^n)$, where the latter is the closed subspace of $L^1(\RR^n)$ consisting of the elements supported by $K$. Pick $\varphi\in\mathcal{K}(\RR^n)$ satisfying $0\leq \varphi\leq 1$ and $\varphi=1$ on $K'-K$. Write $\mu$ as in \eqref{dec-ofm-int-mindofrealanim} and denote by $\mu_{\operatorname{Re}}$ and $\mu_{\operatorname{Im}}$ the real and imaginary parts of $\mu$. For $f\in B$, we estimate as follows
\begin{align*}
\|f*\mu\|_{L^1(K')}&\leq \int_{K'-K}\int_{K'}|f(x-y)|dxd\mu_{\operatorname{Re},+}(y)+ \int_{K'-K}\int_{K'}|f(x-y)|dxd\mu_{\operatorname{Re},-}(y)\\
&{}\quad + \int_{K'-K}\int_{K'}|f(x-y)|dxd\mu_{\operatorname{Im},+}(y) + \int_{K'-K}\int_{K'}|f(x-y)|dxd\mu_{\operatorname{Im},-}(y)\\
&\leq\|f\|_{L^1(\RR^n)}(\langle\mu_{\operatorname{Re},+},\varphi\rangle+\langle\mu_{\operatorname{Re},-},\varphi\rangle+ \langle\mu_{\operatorname{Im},+},\varphi\rangle+\langle\mu_{\operatorname{Im},-},\varphi\rangle).
\end{align*}
Since $\langle\mu_{\operatorname{Re},+},\varphi\rangle=\sup\{\langle \mu_{\operatorname{Re}},\psi\rangle\,|\, \psi\in\mathcal{K}(\RR^n),\,\, 0\leq\psi\leq \varphi\}$ (see the proof of \cite[Theorem 4.3.2 (a), p. 178]{edw-bo}), we infer $\langle\mu_{\operatorname{Re},+},\varphi\rangle\leq \sup_{\phi\in\mathcal{K}(\RR^n),\, |\phi|\leq \varphi}|\langle \mu,\phi\rangle|$. Analogous estimates hold for the rest of the measures in the decomposition of $\mu$. Since $B':=\{\phi\in\mathcal{K}(\RR^n)\,|\, |\phi|\leq \varphi\}$ is a bounded subset of $\mathcal{K}_{\supp\varphi}(\RR^n)$, we deduce $\sup_{f\in B}\|f*\mu\|_{L^1(K')}\leq 4(\sup_{f\in B}\|f\|_{L^1(\RR^n)})\sup_{\phi\in B'}|\langle\mu,\phi\rangle|$ which completes the proof for the continuity of the map in $(a)$.\\
\indent Concerning the injectivity, again, we only show it for the map in $(a)$ as the rest are analogous. Let $f*\mu=0$ for all $f\in L^1_{\comp}(\RR^n)$. This implies that $\mu$ is zero on $\operatorname{span}(\mathcal{K}(\RR^n)*\mathcal{K}(\RR^n))$ and hence $\mu=0$ since $\operatorname{span}(\mathcal{K}(\RR^n)*\mathcal{K}(\RR^n))$ is dense in $\mathcal{K}(\RR^n)$.\\
\indent Next, we address the rest of the claims in $(a)$. Let $Q\in\mathcal{L}(L^1_{\comp}(\RR^n), L^1_{\loc}(\RR^n))$ commutes with all translations. We employ a standard idea (see the proof of \cite[Theorem 2.5.8, p. 153]{Grafakos}) to show that it is a convolution with a Radon measure. In view of \cite[Theorem, p. 332]{edw-bo}, there is $u\in\DD'(\RR^n)$ such that $Q\varphi=\varphi*u$, $\varphi\in\DD(\RR^n)$. Pick nonnegative $\chi\in\DD(\RR^n)$ satisfying $\supp\chi\subseteq B_1(0)$ and $\int_{\RR^n}\chi(x)dx=1$ and set $\chi_k(x)=k^n\chi(kx)$, $x\in\RR^n$, $k\in\ZZ_+$. Then $\{\chi_k*u\}_{k\in\ZZ_+}$ is a bounded subset of $L^1_{\loc}(\RR^n)$ and hence bounded in $\mathcal{K}'(\RR^n)$ too. As $\mathcal{K}(\RR^n)$ is barrelled, $\{\chi_k*u\}_{k\in\ZZ_+}$ is equicontinuous in $\mathcal{K}'(\RR^n)$ and the Banach-Alaoglu theorem \cite[Corollary, p. 84]{Sch} implies that it is weakly relatively compact in $\mathcal{K}'(\RR^n)$. Furthermore, its closure in the weak topology is metrisable; see \cite[Theorem 1.7, p. 128]{Sch}. Thus, there exists a subsequence $(\chi_{k_l}*u)_{l\in\ZZ_+}$ and $\mu\in\mathcal{K}'(\RR^n)$ such that $\langle \chi_{k_l}*u,\varphi\rangle\rightarrow\langle \mu,\varphi\rangle$, $\varphi\in\DD(\RR^n)$. Since $\chi_k*u\rightarrow u$ in $\DD'(\RR^n)$, we conclude $u=\mu$. It remains to show that the map is an open map onto the image. Let $B$ be a bounded subset of $\mathcal{K}(\RR^n)$. There is a compact set $K\subseteq \RR^n$ such that $B$ is a bounded subset of $\mathcal{K}_K(\RR^n)$. Employing the above notation, for $\mu\in\mathcal{K}'(\RR^n)$ we have
$$
\sup_{\psi\in B}|\langle \mu,\psi\rangle|\leq \sup_{\psi\in B}\sup_{k\in\ZZ_+}|\langle\chi_k*\mu,\psi\rangle|\leq (\sup_{\psi\in B}\|\psi\|_{L^{\infty}(\RR^n)})\sup_{k\in\ZZ_+}\|\chi_k*\mu\|_{L^1(K)}
$$
which completes the proof since $\sup_{k\in\ZZ_+}\|\chi_k*\mu\|_{L^1(K)}$ is a continuous seminorm on $\mathcal{L}_b(L^1_{\comp}(\RR^n),L^1_{\loc}(\RR^n))$ of the map $f\mapsto f*\mu$.\\
\indent We now address $(b)$. Let $Q\in\mathcal{L}(\mathcal{K}(\RR^n), \mathcal{C}(\RR^n))$ commutes with all translations. Whence there is $u\in\DD'(\RR^n)$ such that $Q\varphi=\varphi*u$, $\varphi\in\DD(\RR^n)$. With $\{\chi_k\}_{k\in\ZZ_+}$ as above, we are going to show that $\{\chi_k*u\}_{k\in\ZZ_+}$ is an equicontinuous subset of $\mathcal{K}'(\RR^n)$. Since $\mathcal{K}(\RR^n)$ is barrelled, it suffices to show it is weakly bounded in $\mathcal{K}'(\RR^n)$. Let $\varphi\in\mathcal{K}(\RR^n)$ and pick a sequence $(\varphi_m)_{m\in\ZZ_+}$ in $\DD_K(\RR^n)$ with $K\supseteq\supp\varphi$ such that $\varphi_m\rightarrow \varphi$ in $\mathcal{K}_K(\RR^n)$. Then
$$
\sup_{k\in\ZZ_+}|\langle \chi_k*u,\varphi\rangle|\leq \sup_{k,m\in\ZZ_+}|\langle \check{\varphi}_m*u,\check{\chi}_k\rangle|\leq \|\chi\|_{L^1(\RR^n)}\sup_{m\in\ZZ_+}\|\check{\varphi}_m*u\|_{L^{\infty}(B_1(0))}<\infty
$$
which implies the weak boundedness of $\{\chi_k*u\}_{k\in\ZZ_+}$. Now, analogously as above, we infer that $u\in\mathcal{K}'(\RR^n)$. To show that the map is an open map onto the image, with $B$ as above and $\mu\in\mathcal{K}'(\RR^n)$, we have
$$
\sup_{\psi\in B}|\langle \mu,\psi\rangle|\leq \sup_{\psi\in B}\sup_{k\in\ZZ_+}|\langle\check{\psi}*\mu,\check{\chi}_k\rangle|\leq \|\chi\|_{L^1(\RR^n)} \sup_{\psi\in B}\|\check{\psi}*\mu\|_{L^{\infty}(B_1(0))}
$$
which completes the proof of $(b)$ since $\sup_{\psi\in B}\|\check{\psi}*\mu\|_{L^{\infty}(B_1(0))}$ is a continuous seminorm on $\mathcal{L}_b(\mathcal{K}(\RR^n),\mathcal{C}(\RR^n))$ of the map $\varphi\mapsto \varphi*\mu$. The proof of $(c)$ is analogous to the proof of $(b)$ and we omit it.
\end{proof}

As a consequence of Proposition \ref{pro-for-not-multfunradcon} and Proposition \ref{spa-ofm-for-lolicont}, we deduce the main result.

\begin{theorem}\label{mai-the-ofa-insl}
Let $V\subseteq \RR^n$ be a closed cone satisfying $V\backslash\{0\}\neq\emptyset$ and $V':=\operatorname{int}V^*\neq\emptyset$. Let $\psi\in\mathcal{C}^{\infty}(\RR^n)$ be such that $(x,\xi)\mapsto \psi(\xi)$ belongs to $S^0_{1,0}(\RR^{2n})$. Assume that $\operatorname{Re}\psi$ is positively homogeneous of order $0$ and $\supp(\operatorname{Re}\psi)\subseteq V$. If the continuous function $\varphi_1:\Ss^{n-1}\rightarrow \RR$, $\varphi_1(\omega):=\lim_{r\rightarrow\infty}\operatorname{Re}\psi(r\omega)$, is such that $\varphi_1$ or $-\varphi_1$ satisfies \eqref{con-for-exi-offournotmeas} then there are $f\in L^1_{\comp}(\RR^n)$, $g\in L^{\infty}_{\comp}(\RR^n)$ and $\varphi\in\mathcal{K}(\RR^n)$ such that $\psi(D)f\not\in L^1_{\loc}(\RR^n)$, $\psi(D) g\not\in L^{\infty}_{\loc}(\RR^n)$ and $\psi(D)\varphi\not\in\mathcal{C}(\RR^n)$.\\
\indent The same conclusion holds if these assumptions are satisfied by $\operatorname{Im}\psi$ instead of $\operatorname{Re}\psi$.
\end{theorem}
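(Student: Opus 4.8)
The plan is to prove all three non-regularity statements (and their counterparts for $\operatorname{Im}\psi$) simultaneously by contradiction: Proposition \ref{pro-for-not-multfunradcon} supplies the obstruction $\mathcal{F}\psi\notin\mathcal{K}'(\RR^n)$, Proposition \ref{spa-ofm-for-lolicont} converts continuity of $\psi(D)$ into a convolution-by-a-measure representation, and the closed graph theorem is the bridge that upgrades mere well-definedness into continuity.

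First I would record the preliminaries. Since $(x,\xi)\mapsto\psi(\xi)\in S^0_{1,0}(\RR^{2n})$ with $r=0$, the defining bound with $\alpha=\beta=0$ gives $\psi\in L^{\infty}(\RR^n)$, so together with the standing hypotheses on $\operatorname{Re}\psi$ and on $\varphi_1$ all assumptions of Proposition \ref{pro-for-not-multfunradcon} are met, yielding $\mathcal{F}\psi\notin\mathcal{K}'(\RR^n)$. Because reflection about the origin is a topological automorphism of $\mathcal{K}'(\RR^n)$ and $\mathcal{F}^{-1}\psi=(2\pi)^{-n}(\mathcal{F}\psi)\check{}$, this is equivalent to $\mathcal{F}^{-1}\psi\notin\mathcal{K}'(\RR^n)$. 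I would also recall that $\psi(D)$ is continuous on $\SSS'(\RR^n)$, commutes with every translation $T_x$, and satisfies $\psi(D)\varphi=\mathcal{F}^{-1}\psi*\varphi$ for $\varphi\in\DD(\RR^n)$.

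For the $L^1$ statement I would assume, for contradiction, that $\psi(D)f\in L^1_{\loc}(\RR^n)$ for every $f\in L^1_{\comp}(\RR^n)$, so that $\psi(D)$ restricts to a linear map $Q:L^1_{\comp}(\RR^n)\to L^1_{\loc}(\RR^n)$. The graph of $Q$ is closed: if $f_j\to f$ in $L^1_{\comp}(\RR^n)$ and $Qf_j\to g$ in $L^1_{\loc}(\RR^n)$, then both convergences also hold in $\DD'(\RR^n)$, where $\psi(D)$ is continuous, forcing $g=\psi(D)f=Qf$. Since $L^1_{\comp}(\RR^n)$ is a strict $(LB)$-space, hence ultrabornological, and $L^1_{\loc}(\RR^n)$ is a Fr\'echet space, hence webbed, De Wilde's closed graph theorem makes $Q$ continuous; as $Q$ commutes with all translations, Proposition \ref{spa-ofm-for-lolicont}$(a)$ produces $\mu\in\mathcal{K}'(\RR^n)$ with $Qf=f*\mu$ for every $f\in L^1_{\comp}(\RR^n)$. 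Evaluating on $\varphi\in\DD(\RR^n)$ gives $\varphi*\mathcal{F}^{-1}\psi=\psi(D)\varphi=Q\varphi=\varphi*\mu$, and since a distribution is determined by its convolutions with test functions, $\mathcal{F}^{-1}\psi=\mu\in\mathcal{K}'(\RR^n)$, contradicting the preliminaries. Hence some $f\in L^1_{\comp}(\RR^n)$ has $\psi(D)f\notin L^1_{\loc}(\RR^n)$. The $L^{\infty}$ and $\mathcal{K}\to\mathcal{C}$ statements go through identically: $L^{\infty}_{\comp}(\RR^n)$ and $\mathcal{K}(\RR^n)$ are again strict $(LB)$-spaces while $L^{\infty}_{\loc}(\RR^n)$ and $\mathcal{C}(\RR^n)$ are Fr\'echet, so the same closed-graph argument combined with Proposition \ref{spa-ofm-for-lolicont}$(c)$, respectively $(b)$, yields $\mu\in\mathcal{K}'(\RR^n)$ with $\psi(D)\varphi=\varphi*\mu$ on $\mathcal{K}(\RR^n)$; restricting to $\DD(\RR^n)$ once more forces $\mathcal{F}^{-1}\psi=\mu\in\mathcal{K}'(\RR^n)$, a contradiction. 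The case where the hypotheses are met by $\operatorname{Im}\psi$ is the same, invoking the corresponding half of Proposition \ref{pro-for-not-multfunradcon}.

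The step I expect to be the main (and essentially the only) obstacle is the bookkeeping around the closed graph theorem: one must confirm that $L^1_{\comp}(\RR^n)$, $L^{\infty}_{\comp}(\RR^n)$ and $\mathcal{K}(\RR^n)$ are ultrabornological — which holds because each is a countable strict inductive limit of Banach spaces — that $L^1_{\loc}(\RR^n)$, $L^{\infty}_{\loc}(\RR^n)$ and $\mathcal{C}(\RR^n)$ are webbed (being Fr\'echet), and that the relevant restricted operator has closed graph, for which the common continuous embeddings of all these spaces into $\DD'(\RR^n)$ together with the $\SSS'$-continuity of $\psi(D)$ suffice. Once this functional-analytic scaffolding is in place, the rest is a direct and short application of Propositions \ref{pro-for-not-multfunradcon} and \ref{spa-ofm-for-lolicont}.
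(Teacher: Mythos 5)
Your proof is correct and follows essentially the same route as the paper: assume $\psi(D)$ maps $X$ into $Y$ for one of the three pairs, use the common continuous embedding into $\DD'(\RR^n)$ plus De Wilde's closed graph theorem to get continuity of $\psi(D):X\to Y$, invoke Proposition~\ref{spa-ofm-for-lolicont} to represent it as convolution with some $\mu\in\mathcal{K}'(\RR^n)$, identify $\mu=\mathcal{F}^{-1}\psi$ by testing against $\DD(\RR^n)$, and contradict Proposition~\ref{pro-for-not-multfunradcon}. The only cosmetic difference is that you argue sequential closedness of the graph directly while the paper notes $\psi(D):X\to\DD'(\RR^n)$ is continuous and factors through $Y$; both are adequate for De Wilde's theorem.
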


\begin{proof} Let $(X,Y)$ be either one of the pairs $(L^1_{\comp}(\RR^n),L^1_{\loc}(\RR^n))$, $(L^{\infty}_{\comp}(\RR^n),L^{\infty}_{\loc}(\RR^n))$ or $(\mathcal{K}(\RR^n),\mathcal{C}(\RR^n))$. The map $\psi(D): X\rightarrow \DD'(\RR^n)$ is continuous since it is the composition of the continuous maps $X\xrightarrow{\operatorname{Id}} \SSS'(\RR^n)\xrightarrow{\psi(D)}\SSS'(\RR^n)\xrightarrow{\operatorname{Id}}\DD'(\RR^n)$. Assume that $\psi(D)(X)\subseteq Y$. The above implies that $\psi(D):X\rightarrow Y$ is well-defined and with closed graph. The De Wilde closed graph theorem \cite[Theorem 2, p. 57]{kothe2} yields that $\psi(D):X\rightarrow Y$ is continuous ($X$ is ultrabornological in view of \cite[Theorem 7, p. 72]{kothe2}, while \cite[Theorem 4, p. 55]{kothe2} verifies that $Y$ is a strictly webbed space). Consequently, Proposition \ref{spa-ofm-for-lolicont} implies that there is $\mu\in\mathcal{K}'(\RR^n)$ such that $\psi(D)\varphi=\varphi*\mu$, $\varphi\in\DD(\RR^n)$. For $\varphi,\chi\in\DD(\RR^n)$, we have
$$
\langle\mathcal{F}^{-1}\psi,\varphi*\chi\rangle=\langle(\mathcal{F}^{-1}\psi)*\check{\varphi},\chi\rangle=\langle \psi(D)\check{\varphi},\chi\rangle=\langle \check{\varphi}*\mu,\chi\rangle=\langle\mu,\varphi*\chi\rangle.
$$
As $\operatorname{span}(\DD(\RR^n)*\DD(\RR^n))$ is dense in $\DD(\RR^n)$, we infer $\mathcal{F}^{-1}\psi=\mu$. This is in contradiction with Proposition \ref{pro-for-not-multfunradcon}, since $\mathcal{F}\psi=(2\pi)^n(\mathcal{F}^{-1}\psi)\check{}\in\mathcal{K}'(\RR^n)$, and the proof is complete.
\end{proof}

\begin{remark}
As before, it suffices to verify that $\varphi_1$ or $-\varphi_1$ satisfy \eqref{con-for-exi-offournotmeas11}. If $\varphi_1$ is nonnegative and not identically equal to zero, then it always satisfies \eqref{con-for-exi-offournotmeas11}.
\end{remark}

\section{Wave front sets for a class of Banach spaces}

In this section we construct wave front sets that will measure the microlocal regularity with respect to a large class of Banach spaces of distributions. As special cases, they will include most of the standard wave front sets considered in the literature like the Sobolev wave front set \cite{hor1,pil-pra11}, the Besov wave front set \cite{dap-r-scl}, wave front sets with respect to Banach space having solid Fourier side \cite{cor-joh-tof1,pbb}, etc (see also \cite{cor-joh-tof2} for global wave front sets with respect to Banach spaces of distributions). Furthermore, as a consequence of the results in the previous section, we show that if we require the wave front set to behave in a natural way, then it is not possible to construct it such that it measures the $L^1$ or $L^{\infty}$ microlocal regularity, neither the microlocal continuity of a distribution. To make things precise, we recall the following definition and facts from \cite{dpv}. The Banach spaces $F$ with norm $\|\cdot\|_F$ is said to be a \textit{translation-invariant Banach space of distributions}, or (TIB) space for short, if it satisfies the following assumptions:
\begin{itemize}
\item[$(I)$] $\SSS(\RR^n)\subseteq F\subseteq \SSS'(\RR^n)$ with continuous and dense inclusions;
\item[$(II)$] $T_x\in\mathcal{L}(F)$, $x\in\RR^n$;
\item[$(III)$] there are $C,\tau>0$ such that $\omega_F(x):=\|T_x\|_{\mathcal{L}_b(F)}\leq C\langle x\rangle^{\tau}$.
\end{itemize}
One can show that $\omega_F$ is submultiplicative and Borel measurable. Furthermore, for each $f\in F$, the map $\RR^n\rightarrow F$, $x\mapsto T_x f$, is continuous. The convolution $*:\SSS(\RR^n)\times\SSS(\RR^n)\rightarrow \SSS(\RR^n)$ uniquely extends to a continuous bilinear mapping $*:L^1_{\omega_F}(\RR^n)\times F\rightarrow F$ and $F$ becomes a Banach module over the Beurling algebra $L^1_{\omega_F}(\RR^n)$:
\begin{equation}\label{ban-mod-int-repbochs}
\|g*f\|_F\leq \|g\|_{L^1_{\omega_F}(\RR^n)}\|f\|_F,\quad \mbox{where}\quad g*f=\int_{\RR^n}g(x)T_xf dx,
\end{equation}
with the last being a Bochner integral of the $F$-valued function $x\mapsto g(x)T_x f$.\\
\indent The Banach space $F$ will be called a \textit{dual translation-invariant Banach space of distributions}, or (DTIB) space for short, if $F$ is the dual of a (TIB) space $F_0$. The space $F$ always satisfies the continuous inclusions $\SSS(\RR^n)\subseteq F\subseteq \SSS'(\RR^n)$ but they may fail to be dense (e.g. $F=L^{\infty}(\RR^n)$). Furthermore, $F$ always satisfies $(II)$ and $(III)$ and $\omega_F=\check{\omega}_{F_0}$. However, for $f\in F$ fixed, the map $\RR^n\rightarrow F$, $x\mapsto T_x f$, is only (in general) continuous with respect to the weak* topology on $F$. One defines the convolution $*:L^1_{\omega_F}(\RR^n)\times F\rightarrow F$ by duality, i.e. $\langle g*f,f_0\rangle:=\langle f,\check{g}*f_0\rangle$, $f\in F$, $f_0\in F_0$, $g\in L^1_{\omega_F}(\RR^n)$. Then $F$ becomes a Banach module over $L^1_{\omega_F}(\RR^n)$ and \eqref{ban-mod-int-repbochs} holds true but the integral should be interpreted as a Pettis integral taken in the weak* topology of $F$.\\
\indent Let now $E$ be a Banach space of distributions with norm $\|\cdot\|_E$ which satisfies the following conditions:
\begin{itemize}
\item[$(i)$] $\SSS(\RR^n)\subseteq E\subseteq \SSS'(\RR^n)$ with continuous inclusions;
\item[$(ii)$] the Banach space $\mathcal{F}E:=\{f\in\SSS'(\RR^n)\,|\, \mathcal{F}^{-1}f\in E\}$ with norm $\|f\|_{\mathcal{F}E}:=\|\mathcal{F}^{-1}f\|_E$ is a (TIB) or a (DTIB) space;
\item[$(iii)$] $\psi f\in\mathcal{F}E$ for all $\psi\in\SSS(\RR^n)$, $f\in\mathcal{F}E$.
\end{itemize}
For such $E$, it also holds that
\begin{itemize}
\item[$(iii)^*$] the bilinear map $\SSS(\RR^n)\times E\rightarrow E$, $(\varphi,e)\mapsto \varphi e$, is continuous;
\item[$(iii)^{**}$] the bilinear map $\SSS(\RR^n)\times \mathcal{F}E\rightarrow \mathcal{F}E$, $(\psi,f)\mapsto \psi f$, is continuous.
\end{itemize}
To verify $(iii)^*$, for $\varphi\in \SSS(\RR^n)$ and $e\in E$, $(ii)$ and \eqref{ban-mod-int-repbochs} give $\varphi e=(2\pi)^{-n}\mathcal{F}^{-1}(\mathcal{F}\varphi*\mathcal{F}e)\in E$ and also show the continuity of the map. To prove $(iii)^{**}$, it suffices to show it is separately continuous in view of \cite[Theorem 1, p. 158]{kothe2}. The separate continuity follows by a classical application of the closed graph theorem.\\
\indent Given such $E$, we define the following two auxiliary spaces
$$
E_{\loc}:=\{f\in \DD'(\RR^n)\,|\, \varphi f\in E,\, \forall \varphi\in\DD(\RR^n)\},\quad E_{\comp}:=\{e\in E\,|\, \supp e\,\, \mbox{is compact}\}.
$$
(One can equip $E_{\loc}$ and $E_{\comp}$ with natural Fr\'echet and strict $(LB)$-space topologies respectively, but we will not need these facts.) We wish to define a wave front set which will keep track of the directions where a given distribution does not behave like a $E_{\loc}$ function. Given $u\in E_{\loc}$ and $x_0\in\RR^n$, for $\varphi\in\DD(\RR^n)$ satisfying $\varphi(x_0)\neq0$, we always have $\varphi u\in E_{\comp}$ and $\mathcal{F}(\varphi u)\in \mathcal{F}E$. In order for the wave front set to behave in a natural way, in every direction in the frequency space, $\varphi u$ has to behave like an element of $E$, i.e. for each $\xi\in\RR^n\backslash\{0\}$ and open cone $V\ni\xi_0$ and every function $\psi$ with support in $V$ and equaling a positive constant in a cone neighbourhood of $\xi_0$ away from the origin it should hold $\psi \mathcal{F}(\varphi u)\in \mathcal{F}E$. This means that $\mathcal{F}E$ has to have multipliers of this form. Of course, assuming that the characteristic functions of the open cones are multipliers for $\mathcal{F}E$ is too restrictive: when $p\in(2,\infty)$, $\mathcal{F}L^p$ contains distributions of positive order (i.e. which are not measures, see \cite[Corollary 1.5]{hor-p1}) and the multiplication with characteristic functions is meaningless. The idea is to allow multiplication of $\mathcal{F}E$ with a very restrictive class of functions with supports in open cones and equaling to positive constants in subcones away from the origin. (The more restrictive the class of multipliers is, the more spaces $E$ will have it as Fourier multipliers!) From now, we assume that $E$ has the following functions as Fourier multipliers:
\begin{itemize}
\item[$(iv)$] $\psi f\in\mathcal{F}E$, $f\in\mathcal{F}E$, for all $\psi\in\mathcal{C}^{\infty}(\RR^n)$ which are positively homogeneous of order $0$.
\end{itemize}
As before, the closed graph theorem implies that the map $\mathcal{F}E\rightarrow\mathcal{F}E$, $f\mapsto \psi f$, is continuous. We point out that we do not impose any solidity assumptions neither on $E$ nor on $\mathcal{F}E$. Of course, if $\mathcal{F}E$ is solid then it clearly satisfies $(iv)$. The spaces $L^p(\RR^n)$, $1<p<\infty$, and the corresponding Sobolev spaces $W^{r,p}(\RR^n)$, $r\in\RR$, $1<p<\infty$, satisfy $(i)$, $(ii)$, $(iii)$ and $(iv)$ (cf. \cite[Theorem 6.2.7, p. 446]{Grafakos}). The Besov spaces $B^s_{p,q}(\RR^n)$, $s\in\RR$, $p,q\in[1,\infty)$ or $p=q=\infty$, also satisfy $(i)$, $(ii)$, $(iii)$ and $(iv)$ (see \cite[Theorem 2.17, p. 257, and Corollary 5.2, p. 608]{sawano} and \cite[Theorem, p. 140]{triebel}); consequently, the H\"older spaces of non-integer order $s>0$ satisfy these conditions too since they coincide with $B^s_{\infty,\infty}(\RR^n)$. It will be convenient to introduce the following terminology.

\begin{definition}
Let $V$ be a closed cone in $\RR^n$ and $V'$ an open cone satisfying $V\backslash\{0\} \subseteq V'$. The smooth nonnegative function $\psi$ is said to be a \textit{smooth cut-off} for the pair $(V,V')$ if $\supp\psi\subseteq V'$, $\psi$ is positively homogeneous of order $0$ outside of $B_R(0)$ for some $R>0$ and $\psi$ equals a positive constant on $V\backslash B_R(0)$.
\end{definition}

\begin{remark}
Given $V$ and $V'$ as in the definition, there always exists a smooth cut-off $\psi$ for $(V,V')$. To see this, take $\varphi\in\DD(\RR^n)$ such that $0\leq \varphi\leq 1$, $\supp\varphi\subseteq V'$ and $\varphi=1$ on a neighbourhood of $V\cap\mathbb{S}^{n-1}$. Choose $\chi\in\DD(\RR^n)$ such that $0\leq\chi\leq 1$, $\supp\chi\subseteq B_1(0)$ and $\chi=1$ on $B_{1/2}(0)$. The function $\psi(x):=(1-\chi(x))\varphi(x/|x|)$ is a smooth cut-off for $(V,V')$.
\end{remark}

\begin{remark}
If $\psi_j$ is a smooth cut-off for $(V_j,V'_j)$, $j=1,2$, then $\psi_1\psi_2$ is a smooth cut-off for $(V_1\cap V_2,V'_1\cap V'_2)$.
\end{remark}

Given such $E$, we define the wave front set of a distribution with respect to $E$ as follows. We first define the set $\Sigma^E(u)$ for $u\in\EE'(\RR^n)$. The point $\xi\in\RR^n\backslash\{0\}$ does not belong to $\Sigma^E(u)$ if there is an open cone $V$ containing $\xi$ and a smooth cut-off $\psi$ for $(\overline{V},\RR^n)$ such that $\psi\mathcal{F}u\in\mathcal{F}E$. Clearly, $\Sigma^E(u)$ is a closed cone in $\RR^n\backslash\{0\}$.

\begin{lemma}
For $\varphi\in\DD(\RR^n)$ and $u\in\EE'(\RR^n)$, it holds that $\Sigma^E(\varphi u)\subseteq \Sigma^E(u)$. Furthermore, $\Sigma^E(u)=\emptyset$ if and only if $u\in E_{\comp}$.
\end{lemma}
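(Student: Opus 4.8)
The plan is to prove the two assertions separately. For the inclusion $\Sigma^E(\varphi u)\subseteq \Sigma^E(u)$, I would take $\xi_0\notin\Sigma^E(u)$ and produce an open cone $W\ni\xi_0$ together with a smooth cut-off $\rho$ for $(\overline{W},\RR^n)$ with $\rho\mathcal{F}(\varphi u)\in\mathcal{F}E$. By definition there is an open cone $V\ni\xi_0$ and a smooth cut-off $\psi$ for $(\overline V,\RR^n)$ with $\psi\mathcal{F}u\in\mathcal{F}E$; pick a smaller open cone $W$ with $\xi_0\in W$ and $\overline{W}\subseteq V\cup\{0\}$ and choose a smooth cut-off $\psi_1$ for $(\overline W,V)$, so that $\psi_1=\psi_1\psi$ identically (since $\psi$ is a positive constant on $V\setminus B_R(0)$ and we may arrange the radii; strictly, $\psi_1/\psi$ extends to a smooth positively homogeneous function supported where $\psi\neq0$, hence $\psi_1=(\psi_1/\psi)\,\psi\mathcal{F}u$-multiplier argument applies). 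The key identity is $\psi_1\mathcal{F}(\varphi u)=(2\pi)^{-n}\psi_1\big(\mathcal{F}\varphi*\mathcal{F}u\big)$, and I would instead write $\mathcal{F}(\varphi u)=(2\pi)^{-n}\mathcal{F}\varphi*\mathcal{F}u$ and use that $\psi_1\big(\mathcal{F}\varphi*(\psi\mathcal{F}u)\big)$ differs from $\psi_1\mathcal{F}(\varphi u)$ by a term of the form $\psi_1\big(\mathcal{F}\varphi*((1-\psi)\mathcal{F}u)\big)$; since $\mathcal{F}\varphi\in\SSS(\RR^n)$ decays rapidly and $(1-\psi)\mathcal{F}u$ is supported away from $\overline W$ in the relevant conic region, this error term is in $\SSS(\RR^n)\subseteq\mathcal{F}E$. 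Concretely: on the support of $\psi_1$ (a cone around $\xi_0$), for $\eta$ in the support of $(1-\psi)\mathcal{F}u$ one has $|\xi-\eta|\gtrsim|\xi|+|\eta|$ outside a compact set, so convolving the rapidly decaying $\mathcal{F}\varphi$ against the tempered $(1-\psi)\mathcal{F}u$ and multiplying by $\psi_1$ yields a Schwartz function (this is the standard pseudolocal estimate). Then $\psi_1\mathcal{F}(\varphi u)\in\mathcal{F}E$ up to a Schwartz term, using $(iii)^{**}$ for the multiplication $\mathcal{F}\varphi*(\psi\mathcal{F}u)$ by... no: rather, $\mathcal{F}\varphi*(\psi\mathcal{F}u)=(2\pi)^n\mathcal{F}\big(\varphi\cdot\mathcal{F}^{-1}(\psi\mathcal{F}u)\big)$ and $\psi\mathcal{F}u\in\mathcal{F}E$ combined with $(iii)$ (Schwartz functions multiply $\mathcal{F}E$) gives membership, then $(iv)$ lets us apply $\psi_1$. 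This shows $\xi_0\notin\Sigma^E(\varphi u)$.

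For the equivalence $\Sigma^E(u)=\emptyset\iff u\in E_{\comp}$, the direction $(\Leftarrow)$ is easy: if $u\in E_{\comp}$ then $\mathcal{F}u\in\mathcal{F}E$, and for any smooth cut-off $\psi$ we have $\psi\mathcal{F}u\in\mathcal{F}E$ by $(iv)$, so every $\xi\neq0$ is excluded. For $(\Rightarrow)$, assume $\Sigma^E(u)=\emptyset$. For each $\xi\in\mathbb{S}^{n-1}$ choose an open cone $V_\xi\ni\xi$ and a smooth cut-off $\psi_\xi$ for $(\overline{V_\xi},\RR^n)$ with $\psi_\xi\mathcal{F}u\in\mathcal{F}E$. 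By compactness of $\mathbb{S}^{n-1}$, finitely many $V_{\xi_1},\dots,V_{\xi_N}$ cover it; I would like $\sum_j\psi_{\xi_j}$ to be bounded below by a positive constant outside a ball, which holds since each $\psi_{\xi_j}$ is a positive constant on $V_{\xi_j}\setminus B_{R_j}(0)$ and the cones cover all directions, provided we first shrink the supports or use that $\psi_{\xi_j}>0$ on a conic neighbourhood of $\overline{V_{\xi_j}}\cap\mathbb{S}^{n-1}$. Choose $\chi\in\DD(\RR^n)$ with $\chi=1$ near $0$ so that $\Psi:=\sum_j\psi_{\xi_j}+\chi$ is smooth, strictly positive everywhere, and positively homogeneous of order $0$ outside a ball; then $1/\Psi$ is smooth, positively homogeneous of order $0$ outside a ball, hence by $(iv)$ it is a Fourier multiplier for $\mathcal{F}E$ (more precisely $(1-\chi_1)/\Psi$ is positively homogeneous of order $0$ for a suitable cut-off $\chi_1$, and $\chi_1/\Psi$ is Schwartz-like compactly supported so acts by $(iii)$). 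Therefore $\mathcal{F}u=\Psi^{-1}\cdot\big(\sum_j\psi_{\xi_j}\mathcal{F}u+\chi\mathcal{F}u\big)\in\mathcal{F}E$: the sum is in $\mathcal{F}E$ by hypothesis, and $\chi\mathcal{F}u\in\SSS(\RR^n)$ since $\chi$ is compactly supported and $\mathcal{F}u$ is real-analytic, in fact $\chi\mathcal{F}u\in\mathcal{F}E$ trivially as a Schwartz function. Hence $u=\mathcal{F}^{-1}(\mathcal{F}u)\in E$, and $u\in\EE'(\RR^n)$ gives compact support, so $u\in E_{\comp}$.

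The main obstacle is the first part: carefully justifying that the error term $\psi_1\big(\mathcal{F}\varphi*((1-\psi)\mathcal{F}u)\big)$ is Schwartz, i.e. the pseudolocal-type estimate adapted to conic supports, and matching up the radii $R$ in the various smooth cut-offs so that the algebraic identities like $\psi_1=(\psi_1/\psi)\psi$ literally hold as identities of functions positively homogeneous of order $0$ outside a common ball. One must be a bit careful that a smooth cut-off is only required to be homogeneous and constant \emph{outside} some ball, so all manipulations happen modulo compactly supported smooth functions, which are harmless by $(iii)$. A secondary point in the second part is ensuring $\Psi$ is bounded below by a positive constant outside a ball, which is where one uses that each $\psi_{\xi_j}$ equals a positive constant on a full conic neighbourhood and that the cones cover $\mathbb{S}^{n-1}$; shrinking cones at the start guarantees this. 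Everything else is a routine application of $(ii)$–$(iv)$ and the closed graph theorem consequences already established.
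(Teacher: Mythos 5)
Your proposal is correct and follows essentially the same route as the paper. For the inclusion $\Sigma^E(\varphi u)\subseteq\Sigma^E(u)$, both arguments split $\mathcal{F}(\varphi u)=(2\pi)^{-n}\mathcal{F}\varphi*\mathcal{F}u$ by writing $\mathcal{F}u=c_0^{-1}\bigl(\psi\mathcal{F}u+(c_0-\psi)\mathcal{F}u\bigr)$ and multiplying by a smaller conic cut-off $\chi$ (your $\psi_1$): the main term $\chi\bigl(\mathcal{F}\varphi*(\psi\mathcal{F}u)\bigr)$ lands in $\mathcal{F}E$ and the error term $\chi\bigl(\mathcal{F}\varphi*((c_0-\psi)\mathcal{F}u)\bigr)$ is Schwartz by a pseudolocal conic-separation estimate, which in both versions reduces to bounding the Schwartz variable from below by the conic distance. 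The only cosmetic difference is in the main term: the paper recognises $\mathcal{F}\varphi*(\psi\mathcal{F}u)$ directly as a Bochner/Pettis integral $\int_{\RR^n}\mathcal{F}\varphi(\eta)T_\eta(\psi\mathcal{F}u)\,d\eta\in\mathcal{F}E$, whereas you pass to the inverse Fourier side and invoke $(iii)^*$ via $\mathcal{F}\varphi*(\psi\mathcal{F}u)=(2\pi)^n\mathcal{F}\bigl(\varphi\,\mathcal{F}^{-1}(\psi\mathcal{F}u)\bigr)$; these are two presentations of the identical fact (indeed \eqref{ban-mod-int-repbochs} is precisely what makes $(iii)^*$ work). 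For the second assertion the paper states only ``compactness of the sphere and a classical argument,'' which is exactly the partition-of-cut-offs argument you sketch: $\Psi=\sum_j\psi_{\xi_j}+\chi$ is bounded below, smooth, and positively homogeneous of order $0$ in the paper's sense (constant under dilations outside a ball), so $1/\Psi$ falls under $(iv)$ and $\mathcal{F}u=\Psi^{-1}\sum_j\psi_{\xi_j}\mathcal{F}u+\Psi^{-1}\chi\mathcal{F}u\in\mathcal{F}E$.

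Two small housekeeping remarks, neither of which is a genuine gap. First, you should normalise the constant $c_0>0$ which $\psi$ equals on $\overline V$ outside a ball (write $c_0-\psi$ rather than $1-\psi$, or rescale $\psi$ at the start); the paper pulls out $c_0^{-1}$ explicitly. Second, the sentence about $\psi_1=(\psi_1/\psi)\psi$ is a detour you correctly abandon mid-stream; it would in any case require justifying that $\psi_1/\psi$ extends smoothly across the zero set of $\psi$, which is exactly what the conic separation in the decomposition avoids having to discuss, and it is cleaner simply to delete that clause. Also, where you invoke $(iii)$ for $\varphi\cdot\mathcal{F}^{-1}(\psi\mathcal{F}u)$ you actually mean $(iii)^*$ (multiplication on the $E$ side rather than the $\mathcal{F}E$ side), but the paper notes these are equivalent.
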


\begin{proof} The compactness of the unit sphere together with a classical argument yields the validity of the second part. To show the first part, let $\xi_0\not\in \Sigma^E(u)$. There exist an open cone $V\ni\xi_0$ and smooth cut-off $\psi$ for $(\overline{V},\RR^n)$ such that $\psi\mathcal{F}u\in \mathcal{F}E$. Without loss in generality, we can assume that $\RR^n\backslash\overline{V}\neq\emptyset$. Denote by $c_0>0$ the constant that $\psi$ equals to in $V$ away from the origin. Choose open cones $V_1$ and $V_2$ such that $\xi_0\in V_1$, $\overline{V_1}\backslash\{0\}\subseteq V_2$ and $\overline{V_2}\backslash \{0\}\subseteq V$ and pick a smooth cut-off $\chi$ for $(\overline{V_1},V_2)$. Write
$$
\chi(\xi)\mathcal{F}(\varphi u)(\xi)=(2\pi)^{-n}c_0^{-1}(I_1(\xi)+I_2(\xi)),
$$
with
\begin{align}
I_1(\xi)&:=\chi(\xi)\int_{\RR^n}\mathcal{F}\varphi(\eta)\psi(\xi-\eta)\mathcal{F}u(\xi-\eta)d\eta,\label{fun-whi-bel-toeinfrs}\\
I_2(\xi)&:=\chi(\xi)\int_{\RR^n}\mathcal{F}\varphi(\eta)(c_0-\psi(\xi-\eta))\mathcal{F}u(\xi-\eta)d\eta.\nonumber
\end{align}
We are going to show that $I_1\in \mathcal{F}E$ and $I_2\in\SSS(\RR^n)$ which will complete the proof. To prove that $I_2\in \SSS(\RR^n)$, notice that there is $k\in\ZZ_+$ such that $\|\langle\cdot\rangle^{-k}\partial^{\alpha}\mathcal{F}u\|_{L^{\infty}(\RR^n)}<\infty$, for all $\alpha\in\NN^n$. Write
\begin{equation}\label{ine-for-fir-parforintsest}
\langle \xi\rangle^l|\partial^{\alpha}I_2(\xi)|\leq C'\sum_{\beta'+\beta''\leq\alpha} \int_{\RR^n}\langle \eta\rangle^l|\mathcal{F}\varphi(\eta)||\partial^{\beta'}\chi(\xi)||\partial^{\beta''}(c_0-\psi(\xi-\eta))|\langle \xi-\eta\rangle^{k+l}d\eta.
\end{equation}
There is $0<\varepsilon<1/2$ such that $|\omega-\omega'|\geq 2\varepsilon$, $\omega\in \overline{V_2}\cap \mathbb{S}^{n-1}$, $\omega'\in (\RR^n\backslash V)\cap \mathbb{S}^{n-1}$. For $\xi\in \overline{V_2}\backslash\{0\}$ and $\xi'\in\RR^n\backslash (V\cup\{0\})$, we infer
$$
\left|\frac{\xi'}{|\xi'|}-\frac{\xi}{|\xi'|}\right|\geq \left|\frac{\xi'}{|\xi'|}-\frac{\xi}{|\xi|}\right|-\left|\frac{\xi}{|\xi|}-\frac{\xi}{|\xi'|}\right|\geq 2\varepsilon-\frac{||\xi'|-|\xi||}{|\xi'|}\geq 2\varepsilon-\frac{|\xi'-\xi|}{|\xi'|}
$$
from what we deduce that $|\xi'-\xi|\geq \varepsilon|\xi'|$. We only need to estimate the integrand when $\xi\in V_2$ and $\xi-\eta\in\RR^n\backslash (V\cup\{0\})$, for otherwise the right-hand side in \eqref{ine-for-fir-parforintsest} is uniformly bounded in $\xi$. We apply the above inequality with $\xi'=\xi-\eta$ to deduce that $|\eta|\geq \varepsilon|\xi-\eta|$ which immediately gives that the right-hand side in \eqref{ine-for-fir-parforintsest} is uniformly bounded in $\xi$. This implies $I_2\in\SSS(\RR^n)$. To show that $I_1\in\mathcal{F}E$, notice that the function given by the integral in \eqref{fun-whi-bel-toeinfrs} is exactly
$$
\int_{\RR^n}\mathcal{F}\varphi(\eta)T_{\eta}(\psi\mathcal{F}u) d\eta\in \mathcal{F}E,
$$
where the integral should be interpreted as a Bochner integral if $\mathcal{F}E$ is a (TIB) and as a Pettis integral with respect to the weak* topology in $\mathcal{F}E$ if the latter is a (DTIB). This yields $I_1\in\mathcal{F}E$ and the proof is complete.
\end{proof}

If $O$ is an open set in $\RR^n$ and $u\in\DD'(O)$, for each $x\in O$ we define
$$
\Sigma_x^E(u):=\bigcap_{\varphi\in\DD(O),\, \varphi(x)\neq 0}\Sigma^E(\varphi u).
$$
Of course, $\Sigma^E_x(u)$ is a closed cone in $\RR^n\backslash\{0\}$. Furthermore, a standard compactness argument (cf. \cite[p. 254]{hor}) together with the above lemma yield that if $V$ is an open cone containing $\Sigma^E_x(u)$, then there is an open neighbourhood $O'\subseteq O$ of $x$ having a compact closure in $O$ such that $\Sigma^E(\varphi u)\subseteq V$ for all $\varphi\in\DD(O')$.

\begin{definition}
Let $O$ be an open set in $\RR^n$. For every $u\in\DD'(O)$ we define the $E$-wave front set of $u$ by:
$$
WF^E(U):=\{(x,\xi)\in O\times (\RR^n\backslash\{0\})\,|\, \xi\in \Sigma^E_x(u)\}.
$$
\end{definition}

Of course, $WF^E(u)$ is a closed conic subset of $O\times(\RR^n\backslash\{0\})$. For $u\in \DD'(O)$, we define the $E$-singular support of $u$ as the complement of the set of points where $u$ is behaving as an $E_{\loc}$ distribution. To be precise:
$$
\operatorname{sing}\supp_E(u):=O\backslash\{x\in O\,|\, \exists \varphi\in\DD(O)\,\, \mbox{satisfying}\,\, \varphi(x)\neq0\,\, \mbox{such that}\,\, \varphi u\in E\}.
$$
Clearly, $\operatorname{sing}\supp_E(u)$ is closed in $O$. In view of the above and repeating the proof of \cite[Proposition 8.1.3, p. 254]{hor} verbatim, one shows the following result.

\begin{proposition}
For each $u\in\DD'(O)$, $\operatorname{pr}_1(WF^E(u))=\operatorname{sing}\supp_E(u)$. Furthermore, for every $v\in\EE'(\RR^n)$, $\operatorname{pr}_2(WF^E(v))=\Sigma^E(v)$.
\end{proposition}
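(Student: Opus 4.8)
The plan is to follow the template of H\"ormander's classical proof that $\operatorname{pr}_1(WF(u)) = \operatorname{sing}\supp(u)$, as given in \cite[Proposition 8.1.3]{hor}, adapting each step to the $E_{\loc}$ setting. First I would prove the second assertion, which is essentially a tautology once one unpacks definitions: for $v \in \EE'(\RR^n)$, a point $\xi \notin \Sigma^E(v)$ iff there is an open cone $V \ni \xi$ and a smooth cut-off $\psi$ for $(\overline V, \RR^n)$ with $\psi\mathcal F v \in \mathcal F E$, which is exactly the condition defining membership in the complement of $\operatorname{pr}_2(WF^E(v))$ provided one checks $\Sigma^E(\varphi v) \subseteq \Sigma^E(v)$ for all admissible $\varphi$ (from the previous lemma) and, conversely, that taking $\varphi \equiv 1$ near $\supp v$ (so $\varphi v = v$) gives the reverse inclusion $\Sigma^E(v) \subseteq \Sigma^E_x(v)$ for $x \in \supp v$, while for $x \notin \supp v$ one has $\Sigma^E_x(v) = \emptyset$ by choosing $\varphi$ with $\varphi(x) \neq 0$ and $\varphi v = 0$, using $\Sigma^E(0) = \emptyset$.

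For the first assertion, I would show both inclusions. For $\operatorname{pr}_1(WF^E(u)) \subseteq \operatorname{sing}\supp_E(u)$: if $x \notin \operatorname{sing}\supp_E(u)$, pick $\varphi \in \DD(O)$ with $\varphi(x) \neq 0$ and $\varphi u \in E$; since $\varphi u$ has compact support, $\varphi u \in E_{\comp}$, so by the second part of the previous lemma $\Sigma^E(\varphi u) = \emptyset$, hence $\Sigma^E_x(u) = \emptyset$ and $(x,\xi) \notin WF^E(u)$ for every $\xi$. Conversely, for $\operatorname{sing}\supp_E(u) \subseteq \operatorname{pr}_1(WF^E(u))$: if $x \notin \operatorname{pr}_1(WF^E(u))$, then $\Sigma^E_x(u) = \emptyset$; apply the compactness remark stated just before the definition of $WF^E$ (with $V = \RR^n \setminus \{0\}$, or more carefully with a cover argument) to produce an open neighbourhood $O' \ni x$ with compact closure in $O$ such that $\Sigma^E(\varphi u) = \emptyset$ for all $\varphi \in \DD(O')$. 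Fix one such $\varphi$ with $\varphi(x) \neq 0$; then $\Sigma^E(\varphi u) = \emptyset$ forces $\varphi u \in E_{\comp} \subseteq E$ by the previous lemma, which shows $x \notin \operatorname{sing}\supp_E(u)$.

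The main obstacle, and the only place where the argument is not purely formal, is verifying that $\Sigma^E(\varphi u) = \emptyset$ for all $\varphi$ supported in a small enough neighbourhood, starting only from $\Sigma^E_x(u) = \emptyset$. This is where the compactness of the sphere enters: a priori, for each $\xi \in \mathbb S^{n-1}$ there is a cutoff $\varphi_\xi$ (depending on $\xi$) with $\varphi_\xi(x)\neq 0$ and a cone $V_\xi \ni \xi$ such that a smooth cut-off for $(\overline{V_\xi}, \RR^n)$ sends $\mathcal F(\varphi_\xi u)$ into $\mathcal F E$; one extracts a finite subcover $V_{\xi_1}, \dots, V_{\xi_N}$ of $\mathbb S^{n-1}$, sets $\varphi_0 := \varphi_{\xi_1}\cdots\varphi_{\xi_N}$ (still nonvanishing at $x$), uses the previous lemma's inclusion $\Sigma^E((\varphi_0/\varphi_{\xi_j}) \cdot (\varphi_{\xi_j} u)) \subseteq \Sigma^E(\varphi_{\xi_j} u)$ to see each $V_{\xi_j}$ is disjoint from $\Sigma^E(\varphi_0 u)$, and concludes $\Sigma^E(\varphi_0 u) = \emptyset$; then for any $\varphi \in \DD(O')$ with $O'$ chosen so that every such $\varphi$ factors as $\varphi = (\varphi/\varphi_0)\varphi_0$ with $\varphi/\varphi_0 \in \DD(O)$ — which is possible on a neighbourhood where $\varphi_0 \neq 0$ — one gets $\Sigma^E(\varphi u) \subseteq \Sigma^E(\varphi_0 u) = \emptyset$. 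This is precisely the content of the compactness remark recorded before the definition of $WF^E$, so in the write-up I would simply invoke it and note that the remainder of the proof is \cite[Proposition 8.1.3]{hor} verbatim.
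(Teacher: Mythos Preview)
Your proposal is correct and matches the paper's approach exactly: the paper's proof consists of the single sentence that the result follows by repeating \cite[Proposition~8.1.3, p.~254]{hor} verbatim after invoking the preceding lemma and the compactness remark, which is precisely what you propose to do.

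One small slip in your informal sketch of the second assertion: taking $\varphi\equiv 1$ near $\supp v$ gives $\Sigma^E_x(v)\subseteq \Sigma^E(v)$ (since $\Sigma^E_x(v)$ is an \emph{intersection} over all admissible $\varphi$, hence contained in each $\Sigma^E(\varphi v)$), not the reverse inclusion you wrote. The inclusion $\Sigma^E(v)\subseteq \operatorname{pr}_2(WF^E(v))$ is not a tautology; it needs the same compactness/partition-of-unity argument you correctly spell out for the first assertion, this time applied to the compact set $\supp v$ rather than to $\mathbb S^{n-1}$ (cover $\supp v$ by finitely many $\{\varphi_{x_j}\neq 0\}$, use the lemma to pass to a subordinate partition, and use that $\Sigma^E$ is subadditive). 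Since you ultimately defer to H\"ormander's proof, this does not affect the validity of your write-up.
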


Finally, in view of Theorem \ref{mai-the-ofa-insl}, we immediately deduce the following result.

\begin{corollary}
Let $\psi$ be a smooth cut-off for $(V_0,V)$. If $V_0\backslash\{0\}\neq\emptyset$ and $\operatorname{int}V^*\neq\emptyset$ then there are $f\in L^1_{\comp}(\RR^n)$, $g\in L^{\infty}_{\comp}(\RR^n)$ and $\varphi\in\mathcal{K}(\RR^n)$ such that $\psi(D)f\not\in L^1_{\loc}(\RR^n)$, $\psi(D) g\not\in L^{\infty}_{\loc}(\RR^n)$ and $\psi(D)\varphi\not\in\mathcal{C}(\RR^n)$.
\end{corollary}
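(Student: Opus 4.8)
The plan is to verify that $\psi$, together with the \emph{closed} cone $\overline{V}$, satisfies every hypothesis of Theorem~\ref{mai-the-ofa-insl}; the corollary then follows at once, the distributions $f$, $g$ and $\varphi$ being exactly those produced by that theorem.

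First I would dispose of the cones. As $V$ is an open cone with $V_0\backslash\{0\}\subseteq V$, its closure $\overline{V}$ is a closed cone satisfying $\overline{V}\backslash\{0\}\supseteq V_0\backslash\{0\}\neq\emptyset$, and, since $V^*=(\overline{V})^*$, we have $\operatorname{int}(\overline{V})^*=\operatorname{int}V^*\neq\emptyset$. By the definition of a smooth cut-off, $\psi\in\mathcal{C}^{\infty}(\RR^n)$ is nonnegative (hence real-valued, so $\operatorname{Re}\psi=\psi$), it is positively homogeneous of order $0$ outside some ball $B_R(0)$, and $\supp\psi\subseteq V\subseteq\overline{V}$; consequently $(x,\xi)\mapsto\psi(\xi)$ lies in $S^0_{1,0}(\RR^{2n})$, as recalled in the remark preceding Proposition~\ref{pro-for-not-multfunradcon}.

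Next I would identify the boundary function $\varphi_1(\omega)=\lim_{r\to\infty}\operatorname{Re}\psi(r\omega)$. For $\omega\in\Ss^{n-1}$ the value $\psi(r\omega)$ is constant for $r>R$, so the limit exists and $\varphi_1(\omega)=\psi((R+1)\omega)$; thus $\varphi_1$ is continuous and nonnegative, and on the nonempty set $V_0\cap\Ss^{n-1}$ it equals the positive constant $c_0$ that $\psi$ takes on $V_0\backslash B_R(0)$. By continuity of $\psi$, the set $\{\omega\in\Ss^{n-1}\,:\,\varphi_1(\omega)>0\}$ has positive surface measure, so $\varphi_1$ is a nonnegative element of $L^{\infty}(\Ss^{n-1})\backslash\{0\}$ and, by Remark~\ref{rem-for-non-negfunconfs} (cf.\ the remark following Theorem~\ref{mai-the-ofa-insl}), satisfies \eqref{con-for-exi-offournotmeas}. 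Applying Theorem~\ref{mai-the-ofa-insl} with $\overline{V}$ in the role of its cone and with this $\psi$ then produces $f\in L^1_{\comp}(\RR^n)$, $g\in L^{\infty}_{\comp}(\RR^n)$ and $\varphi\in\mathcal{K}(\RR^n)$ with $\psi(D)f\notin L^1_{\loc}(\RR^n)$, $\psi(D)g\notin L^{\infty}_{\loc}(\RR^n)$ and $\psi(D)\varphi\notin\mathcal{C}(\RR^n)$.

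I do not anticipate a genuine obstacle: the corollary amounts to a translation of hypotheses. The only points that need a moment's care are passing from the open cone $V$ to its closure (using $V^*=(\overline{V})^*$ to retain $\operatorname{int}V^*\neq\emptyset$) and confirming that $\varphi_1$ does not vanish almost everywhere rather than merely at isolated points — which is precisely where the continuity of $\psi$ and the fact that it is pinned to a positive constant on all of $V_0\backslash B_R(0)$ come in.
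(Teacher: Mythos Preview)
Your proposal is correct and is precisely the argument the paper intends: the paper states only that the corollary follows ``in view of Theorem~\ref{mai-the-ofa-insl}'', and your write-up spells out exactly that verification of hypotheses (passing to $\overline{V}$ via $V^*=(\overline{V})^*$ and noting that the nonnegative $\varphi_1$ is not a.e.\ zero since it is continuous and equal to $c_0>0$ at a point of $V_0\cap\Ss^{n-1}$).
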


Hence, if one wants for every smooth cut-off $\psi$ and every $u\in E_{\comp}(\RR^n)$ to hold $\psi\mathcal{F}u\in \mathcal{F}E$, then one can never define a wave front for $E=L^1(\RR^n)$, $E=L^{\infty}(\RR^n)$ and $E=\mathcal{C}_0(\RR^n)$.

\end{document}